\documentclass[12pt]{amsart}

\topmargin = 0 in
\textwidth = 6.5 in
\textheight = 8.8 in
\oddsidemargin = 0.0 in
\evensidemargin = 0.0 in

\usepackage{amssymb}
\usepackage{amsmath}
\usepackage{amsthm}
\usepackage{graphicx}
\usepackage{enumerate}
\usepackage{tikz}
\usepackage{tabularx}
\usepackage{tabmac_avi}
\usetikzlibrary{arrows.meta}
\tikzset{>={Latex[width=1.2mm,length=1.7mm]}}

\newtheorem{thm}{Theorem}[section]
\newtheorem{prop}[thm]{Proposition}

\newtheorem{lem}[thm]{Lemma}

\newtheorem{prob}[thm]{Problem}

\numberwithin{equation}{section}

\newcommand{\sn}{\mathfrak{S}_n}

\newcommand{\mfs}[1]{\mathfrak{S}_{#1}}

\newcommand{\zsn}{\mathbb{Z}[\sn]}

\newcommand{\cx}{\mathbb{C}[x]}

\newcommand{\csn}{\mathbb{C}[\sn]}

\newcommand{\ol}[1]{\overline{#1}}

\newcommand{\K}{\mathcal{B}}

\newcommand{\imm}[1]{\mathrm{Imm}_{#1}}

\newcommand{\sumsb}[1]{\sum_{\substack{#1}}} 

\newcommand{\inv}{\textsc{inv}}

\newcommand{\defeq}{:=}

\newcommand{\spn}{\mathrm{span}}
\newcommand{\hgt}{\mathrm{hgt}}
\newcommand{\sgn}{\mathrm{sgn}}
\newcommand{\triv}{\mathrm{triv}}

\newcommand{\tr}{{\negthickspace \top \negthickspace}}

\newcommand{\ntnsp}{\negthinspace}
\newcommand{\ntksp}{\negthickspace}
\newcommand{\nTksp}{\negthickspace\negthickspace}

\newcommand{\nTtksp}{\negthickspace\negthickspace\negthickspace}

\newcommand{\bp}{\begin{prob}}
\newcommand{\ep}{\end{prob}}

\newcommand{\mat}[1]{\mathrm{Mat}_{#1 \times #1}}

\newcommand{\perm}{\mathrm{per}}
\newcommand{\ctype}{\mathrm{ctype}}

\newcommand{\permmon}[2]{#1_{1,#2_1} \ntnsp\cdots {#1}_{n,#2_n}}

\newcommand{\ssm}{\smallsetminus}

\newcommand{\trspace}[1]{\mathcal{T}_{#1}}
\newcommand{\upparrow}{\big \uparrow \nTksp \phantom{\uparrow}}

\newcommand{\tn}{T_n(\xi)}





\begin{document}
\author{Mark Skandera}
\author{Daniel Soskin}
\title{Barrett--Johnson inequalities for totally nonnegative matrices}

\bibliographystyle{dart}

\date{\today}

\begin{abstract}
Given a matrix $A$, let $A_{I,J}$ denote the submatrix of $A$ determined by
rows $I$ and columns $J$.  Fischer's Inequalities state that for each $n \times n$ Hermitian positive semidefinite
matrix $A$, and each subset
$I$ of $\{1,\dotsc,n\}$ and its complement $I^c$, we have
$\det(A) \leq \det(A_{I,I})\det(A_{I^c,I^c})$. Barrett and Johnson (Linear Multilinear Algebra 34, 1993)
extended these to state inequalities 
for sums of products of principal minors whose orders
are given by nonincreasing integer sequences $(\lambda_1,\dotsc,\lambda_r)$,
$(\mu_1,\dotsc,\mu_s)$ summing to $n$.  Specifically, if $\lambda_1+\cdots+\lambda_i\leq \mu_1+\cdots+\mu_i$ for all $i$, then
$$
\lambda_1!\cdots\lambda_r!
\nTksp\sum_{(I_1,\dotsc,I_r)}\nTksp
\det(A_{I_1,I_1}) \cdots \det(A_{I_r,I_r}) ~\geq~
  \mu_1!\cdots\mu_s!
  \nTksp\sum_{(J_1,\dotsc,J_s)}\nTksp
  \det(A_{J_1,J_1}) \cdots \det(A_{J_s,J_s}),
$$
where sums are over sequences of disjoint subsets
of $\{1,\dotsc,n\}$ satisfying $|I_k| = \lambda_k$,
$|J_k| = \mu_k$.
We show that these inequalities hold for totally nonnegative matrices
as well.
\end{abstract}

\maketitle
\section{Introduction}\label{s:intro}
A matrix $A \in \mat{n}(\mathbb C)$ 
is called \emph{Hermitian}
if it satisfies $A^*=A$ where $*$ denotes conjugate transpose. 
Such a matrix is called \emph{Hermitian positive semi-definite} (HPSD) if we have $x^* A x \geq 0$ for all $x \in \mathbb C^n.$ 
 For $A \in \mat n(\mathbb R)$, the Hermitian property reduces to symmetry $A^\tr = A$, and the positive semidefinite (PSD) property reduces to $x^\tr A x \geq 0$ for all $x \in \mathbb R^n$. Hermitian matrices are fundamental to quantum mechanics since they describe operators with real eigenvalues. Positive definite (semi-definite) matrices are one of the matrix analogs of positive (nonnegative) numbers, and correspond to inner products (nonnegative quadratic forms). They appear in many fields such as geometry, numerical analysis, optimization, quantum physics, and statistics. See \cite{Bhatia} and \cite{MatAnHJ}.

Another matrix analog of nonnegative numbers is the
class of \emph{totally nonnegative} (TNN) matrices, 
those matrices in
$\mat{n}(\mathbb R)$
in which each minor (determinant of a square submatrix) is
nonnegative. Initially, total nonnegativity arose in three different areas. It was studied by Gantmacher and Krein in oscillations of vibrating systems, by Schoenberg in applications to analysis of real roots of polynomials and spline functions, and by Karlin in integral equations and statistics. TNN matrices appear in many other areas of mathematics. (See, e.g., \cite{fallat2011total}.) 


Like TNN matrices, HPSD matrices have a well-known characterization in terms of matrix minors.  
Given an $n \times n$ matrix $A = (a_{i,j})$
and subsets $I,J \subseteq [n] \defeq \{1,\dotsc,n\}$,
define the submatrix $A_{I,J} = (a_{i,j})_{i \in I, j \in J}$, and define the set $I^c \defeq [n] \ssm I$.
While TNN matrices are characterized by the inequalities
\begin{equation}\label{eq:tnn}
\det(A_{I,J}) \geq 0, \qquad \text{for all  $I, J  \subseteq [n]$ with } |I|=|J|,
\end{equation}
HPSD matrices are characterized by the equalities
\begin{equation}\label{eq:hpsd}
\begin{aligned}
a_{j,i}^*  &= a_{i,j}, &\qquad &\text{for all $i,j \in [n]$},\\   
\det(A_{I,I}) &\geq 0, &\qquad &\text{for all $I \subseteq [n]$}. 
\end{aligned}
\end{equation}

From the inequalities in (\ref{eq:tnn}) -- (\ref{eq:hpsd}), one can deduce many 
inequalities satisfied by polynomials in the entries of $n \times n$ HPSD, PSD, and/or TNN matrices.  
The following inequalities hold for  
HPSD matrices and TNN matrices.
Hadamard~\cite{hadamard1893} showed that for $A$ HPSD we have
\begin{equation}\label{eq:hadamard}
    \det(A)\leq a_{1,1} \cdots a_{n,n},
    \end{equation}
and     Koteljanskii~\cite{KotelProp}, \cite{KotelPropRuss} showed that this holds for $A$ TNN as well.
    Marcus~\cite{marcus1963perm} proved a permanental analog
    \begin{equation}\label{eq:marcus}
    \perm(A)\geq a_{1,1} \cdots a_{n,n}
    \end{equation}
    for $A$ HPSD, and this analog clearly holds for $A$ TNN as well.
    Fischer~\cite{fischer1908} strengthened (\ref{eq:hadamard}) by showing that for all $I \subseteq [n]$ we have
    \begin{equation}\label{eq:fischer}
    \det(A)\leq \det(A_{I,I}) \det(A_{I^c,I^c}),
    \end{equation}
     and Ky Fan showed that this holds for $A$ TNN as well (unpublished; see \cite{carlson1967}).
    Lieb~\cite{LiebPerm} proved a permanental analog
    \begin{equation}\label{eq:lieb}
    \perm(A)\geq \perm(A_{I,I}) \;\perm(A_{I^c,I^c}),
    \end{equation}
    for $A$ HPSD, and this analog holds for $A$ TNN because every term in the expansion of the product on the right-hand side of (\ref{eq:lieb}) also appears on the left-hand side.
    Koteljanskii~\cite{KotelProp}, \cite{KotelPropRuss} strengthened (\ref{eq:fischer}) further by proving that for all $I, J \subseteq [n]$ we have
    \begin{equation}\label{eq:koteljanskii}
    \det(A_{I\cup J, I\cup J})\det(A_{I\cap J, I\cap J})\leq \det(A_{I,I}) \det(A_{J,J})
    \end{equation}
    for $A$ belonging to a class of matrices including HPSD and TNN matrices.  
    Strengthening (\ref{eq:hadamard}) somewhat differently, Schur~\cite{SchurUberendliche} proved that
    for each $\sn$-character $\chi$, the inequality
    \begin{equation}\label{eq:schur}
    \det(A) \leq \frac{\imm{\chi}(A)}{\chi(e)}
    \end{equation}
    holds for $A$ HPSD, where $\imm{\chi}$ is the corresponding immanant (defined in \S \ref{s:imm}).  Stembridge~\cite[Cor.~3.4]{StemImm} showed that this holds for $A$ TNN as well.
    
    
Other inequalities hold for 
TNN matrices, and are not known to hold for HPSD matrices.
For instance Charles Johnson showed that the permanental analog of (\ref{eq:schur}), 
\begin{equation}\label{eq:permdominance}
\perm(A) \geq \frac{\imm{\chi}(A)}{\chi(e)},
\end{equation}
holds for TNN matrices (unpublished; see \cite[p.\,1088]{StemConj}).  The validity of this inequality for HPSD matrices was conjectured by Lieb~\cite{LiebPerm}, is still open, and is known as 
the {\em permanental dominance conjecture}.
Extending the results (\ref{eq:fischer}) and (\ref{eq:koteljanskii}), Fallat, Gekhtman, and Johnson~\cite{FGJMult}
and the first author~\cite{SkanIneq} characterized all $8$-tuples
$(I,J,K,L,I',J',K',L')$ of subsets of $[n]$
for which we have the inequality
\begin{equation}\label{eq:fgjskan}
\det(A_{I,I'})\det(A_{J,J'}) \leq 
\det(A_{K,K'})\det(A_{L,L'})
\end{equation}
whenever $A$ is TNN.
(See \cite{GkhtSos2022bounded} for progress on products of three or more minors.) 
Drake, Gerrish, and the first author~\cite{DGSBruhat} proved a similar result for products of $n$ matrix entries.
In particular, the inequality
\begin{equation}\label{eq:dgs}
    \permmon av \geq \permmon aw
\end{equation}
holds for all TNN $A$ if and only if $v \leq w$ in the Bruhat order on $\sn$. (See, e.g., \cite{BBCoxeter}.)
 Other linear combinations of the monomials in (\ref{eq:dgs}) which produce valid inequalities for TNN matrices are related to Lusztig's work on canonical bases of quantum groups~\cite{LusztigTPCB} and are called \emph{Kazhdan-Lusztig immanants} in \cite{RSkanKLImm}.
 Writing these as
 $\{ \imm{w}(A) \,|\, w \in \sn \}$, we have
 \begin{equation}\label{eq:klimm}
 \imm w(A) \geq 0
 \end{equation}
for all $w \in \sn$ and all TNN $A$.
(See \cite{RSkanKLImm} for definitions.)
We will define and use special cases of these,
called {\em Temperley-Lieb immanants} in \cite{RSkanTLImmp}, in Section~\ref{s:tl2c}.

Still other inequalities hold for HPSD matrices and are not known to hold for TNN matrices.
Heyfron~\cite{HeyfronImmDom} extended (\ref{eq:schur}) and proved special cases of the permanental dominance conjecture (\ref{eq:permdominance}) by considering irreducible $\sn$-characters
indexed by "hook" shapes $\lambda = (k, 1, \dotsc, 1)$.  (See Sections \ref{s:sn} -- \ref{s:imm} for definitions.)
In particular, we have
\begin{equation}\label{eq:heyfron}
   \perm(A)=\frac{\imm{\chi^n}(A)}{\chi^{n}(e)}\geq \frac{\imm{\chi^{n-1,1}}(A)}{\chi^{n-1,1}(e)}\geq \frac{\imm{\chi^{ n-2,1,1}}(A)}{\chi^{n-2,1,1}(e)}\geq \cdots \geq \frac{\imm{\chi^{1,\cdots,1}}(A)}{\chi^{1,\cdots,1}(e)}=\det(A)
\end{equation} 
for $A$ HPSD.  (See \cite{pate1999} for more results of this form.)
    Borcea and Br\"and\'en showed that averages
    of the products of pairs of minors appearing in (\ref{eq:fischer}) satisfy the log-concavity inequalities~\cite[Cor.\,3.1\,(b)]{BBApps} 
    \begin{equation}\label{eq:borceabranden1}
        \left( \sumsb{I \subseteq [n]\\|I|=k} \frac{\det(A_{I,I}) \det(A_{I^c,I^c})}{\displaystyle{\binom nk}} \ntksp\right)^{\nTksp 2}
    \geq 
    \left(\sumsb{I \subseteq[n]\\|I|=k+1} \nTksp\frac{\det(A_{I,I}) \det(A_{I^c,I^c})}{\displaystyle{\binom n{k+1}}}\ntksp\right)
    \ntksp \left(\sumsb{I \subseteq [n]\\|I|=k-1}\nTksp\frac{ \det(A_{I,I}) \det(A_{I^c,I^c})}{\displaystyle{\binom n{k-1}}}\ntksp\right)
    \end{equation}
    for all $A$ HPSD and $k = 1,\dotsc,n-1$.
    They also showed that these averages
    satisfy the Maclaurin-type inequalities~\cite[Cor.\,3.1\,(c)]{BBApps} 
    \begin{equation}\label{eq:borceabranden2}
        \left(\frac{\displaystyle{\sum}_{|I|=k+1} \det(A_{I,I}) \det(A_{I^c,I^c})}{\displaystyle{\binom n{k+1}}\det(A)}\right)^{\ntksp k} \ntnsp \geq \left(\frac{\displaystyle{\sum}_{|I|=k} \det(A_{I,I}) \det(A_{I^c,I^c})}{\displaystyle{\binom n{k}}\det(A)}\right)^{k+1}
    \end{equation}
    for all nonsingular $A$ HPSD and $k = 1,\dotsc,n-1$. 

Nearly belonging to the HPSD list above are 
the Barrett--Johnson Inequalities~\cite{BJMajor} for (real) PSD matrices.  Given nonnegative integer sequences
$\lambda = (\lambda_1,\dotsc,\lambda_r)$, $\mu = (\mu_1,\dotsc,\mu_s)$ summing to $n$, we have
\begin{equation}\label{eq:bj}
\lambda_1!\cdots\lambda_r!
\nTksp\sum_{(I_1,\dotsc,I_r)}\nTksp
\det(A_{I_1,I_1}) \cdots \det(A_{I_r,I_r}) ~\geq~
  \mu_1!\cdots\mu_s!
  \nTksp\sum_{(J_1,\dotsc,J_s)}\nTksp
  \det(A_{J_1,J_1}) \cdots \det(A_{J_s,J_s}),
\end{equation}
if and only if $\lambda_1 + \cdots + \lambda_i \leq \mu_1 + \cdots + \mu_i$ for all $i$,
where the sums are over
sequences of disjoint subsets of $[n]$ having cardinalities $\lambda$, $\mu$, respectively.
We will show that these inequalities also hold for TNN matrices.

In Section~\ref{s:sn}, we review basic facts about the symmetric group, its traces (class functions), and symmetric functions. 
In Section~\ref{s:imm} we apply these ideas to form certain polynomials called {\em immanants} in the entries of $n \times n$ matrices, and we discuss the evaluation of these at TNN matrices. 
In Section~\ref{s:tl2c}, we discuss the Temperley--Lieb algebra, related immanants,
and their relationship to products of matrix minors.
In Section~\ref{s:mn}, we present our main result
that the Barrett--Johnson inequalities hold for
all $n \times n$ TNN matrices. 

\section{The symmetric group, its traces, and symmetric functions}\label{s:sn}

The {\em symmetric group algebra} $\csn$ is
generated over $\mathbb C$ by $s_1,\dotsc, s_{n-1}$, subject to relations
\begin{equation*}
\begin{alignedat}{2}
s_i^2 &= e &\qquad
&\text{for $i = 1, \dotsc, n-1$},\\
s_is_js_i &= s_js_is_j &\qquad
&\text{for $|i - j| = 1$},\\
s_is_j &= s_js_i &\qquad
&\text{for $|i - j| \geq 2$}.
\end{alignedat}
\end{equation*}
We define the {\em one-line notation} $w_1 \cdots w_n$ of $w \in \sn$ by
letting any expression for $w$ act on the word $1 \cdots n$,
where each generator $s_j = s_{[j,j+1]}$ acts on an $n$-letter word by
swapping the letters in positions $j$ and $j+1$, i.e.,
$s_j \circ v_1 \cdots v_n = v_1 \cdots v_{j-1} v_{j+1} v_j v_{j+2} \cdots v_n$. Whenever $s_{i_1}\cdots s_{i_\ell}$ is a reduced (short as possible) expression for $w \in \sn$, we call $\ell$ the {\em length}
of $w$ and write $\ell = \ell(w)$.  It is known that $\ell(w)$ is equal
to $\inv(w)$, the number of inversions 
in 
$w_1 \cdots w_n$. 
It is also known that conjugacy classes in $\sn$ are precisely the set of permutations that have the same cycle type. We name cycle type by integer
partitions of $n$, the
weakly decreasing positive integer sequences
$\lambda = (\lambda_1, \dotsc, \lambda_\ell)$ satisfying
$\lambda_1 + \cdots + \lambda_\ell = n$.
The $\ell = \ell(\lambda)$ components of $\lambda$ are called its {\em parts},
and we let $|\lambda| = n$ and $\lambda \vdash n$ denote that
$\lambda$ is a partition of $n$.  
Given $\lambda \vdash n$,
we define the {\em transpose} partition
$\lambda^\tr = (\lambda^\tr_1, \dotsc, \lambda^\tr_{\lambda_1})$
by
\begin{equation*}
  \lambda^\tr_i = \# \{ j \,|\, \lambda_j \geq i \}.
\end{equation*}
Sometimes it is convenient to name a partition with exponential notation,
omitting parentheses and commas, so that
$4^21^6 \defeq (4, 4, 1, 1, 1, 1, 1, 1)$. Sometimes it is convenient to partially order partitions of $n$ by the \emph{majorization order}, \begin{equation}
    \lambda\preceq\mu \quad \text{iff} \quad \lambda_{1}+\cdots+\lambda_{i}\leq \mu_{1}+\cdots+\mu_{i} 
\end{equation} 
for all $i$. It is known that if $\lambda$ is covered by $\mu$ in this partial order then $\lambda$, $\mu$ have equal parts except in two positions $i < j$ where we have
\begin{equation}\label{eq:movebox}
    \mu_i=\lambda_i+1, \qquad \mu_j=\lambda_j-1.
\end{equation}

Let $\Lambda$ be the ring of symmetric functions in 
$x = (x_1, x_2, \dotsc)$ having integer coefficients, and
let $\Lambda_n$ be the $\mathbb Z$-submodule of homogeneous functions
of degree $n$.
This submodule has rank equal to the number of partitions of $n$.
Five standard bases of $\Lambda_n$ consist of the monomial $\{m_\lambda \,|\, \lambda \vdash n\}$,
elementary $\{e_\lambda \,|\, \lambda \vdash n\}$,
(complete) homogenous $\{h_\lambda \,|\, \lambda \vdash n\}$,
power sum $\{p_\lambda \,|\, \lambda \vdash n\}$,
and Schur $\{s_\lambda \,|\, \lambda \vdash n\}$ 
symmetric functions.
(See, e.g., \cite[Ch.\,7]{StanEC2} for definitions.)

The change-of-basis matrix relating
$\{ e_\lambda \,|\, \lambda \vdash n \}$ and
$\{ m_\lambda \,|\, \lambda \vdash n \}$ is given by the equations
\begin{equation}\label{eq:etom}
  e_\lambda = \sum_{\mu \preceq \lambda^\tr} M_{\lambda,\mu} m_\mu,
\end{equation}
where $M_{\lambda,\mu}$ equals the number of column-strict Young tableaux
of shape $\lambda^\tr$ and content $\mu$. That is, $M_{\lambda,\mu}$ is the number of histograms having $\lambda_{i}$ boxes in column $i$ for all $i$, filled with $\mu_1$ ones, $\mu_2$ twos, etc., with column entries strictly increasing from bottom to top.
For example we have 
\begin{equation*}
e_{221}=m_{32}+2m_{311}+5m_{221}+12m_{2111}+30m_{11111},     
\end{equation*}
and the coefficients $M_{221,32}=1$ of $m_{32}$,
$M_{221,311}=2$ of $m_{311}$,
$M_{221,221}=5$ of $m_{221}$
count the column-strict tableaux
\begin{equation*}
\tableau[scY]{2,2|1,1,1}\,,\qquad\qquad\qquad
\tableau[scY]{2,3|1,1,1}\,,\quad
\tableau[scY]{3,2|1,1,1}\,,
\end{equation*}
\begin{equation*}
\tableau[scY]{2,2|1,1,3}\,,\quad
\tableau[scY]{2,3|1,1,2}\,,\quad
\tableau[scY]{3,2|1,1,2}\,,\quad
\tableau[scY]{2,3|1,2,1}\,,\quad
\tableau[scY]{3,2|2,1,1}\,.
\end{equation*}
Similar tableaux which are weakly increasing in rows combinatorially
interpret the change-of-basis matrix relating
$\{ h_\lambda \,|\, \lambda \vdash n\}$ and
$\{ m_\lambda \,|\, \lambda \vdash n\}$.

Let $\trspace{n}$ be the $\mathbb Z$-module of
$\zsn$-{\em traces} (equivalently, $\sn$-class functions), 
linear functionals
$\theta: \zsn \rightarrow \mathbb Z$ satisfying
$\theta(gh) = \theta(hg)$ for all $g, h \in \zsn$.
Like the $\mathbb Z$-module $\Lambda_n$,
the trace space 
$\trspace n$ has dimension equal to the number of integer partitions of $n$.  
The Frobenius $\mathbb Z$-module isomorphism (\ref{eq:Frob})
\begin{equation}\label{eq:Frob}
  \begin{aligned}
    \mathrm{Frob}: \trspace n &\rightarrow \Lambda_n \\
    \theta &\mapsto \frac 1{n!} \sum_{w \in \sn} \theta(w) p_{\ctype(w)}
  \end{aligned}
\end{equation}
define bijections
between standard
bases of $\Lambda$, 
and $\trspace n$.
Schur functions correspond to irreducible characters,
\begin{equation*}
  s_\lambda \leftrightarrow \chi^\lambda 
\end{equation*}
while elementary and homogeneous symmetric functions
correspond to induced sign and trivial characters,
\begin{equation*}
  \begin{gathered}
  e_\lambda \leftrightarrow \epsilon^\lambda = \sgn \upparrow^{\sn}_{\mfs \lambda}
  \qquad
  h_\lambda \leftrightarrow \eta^\lambda = \triv \upparrow^{\sn}_{\mfs \lambda}
  \end{gathered}
\end{equation*}
where $\mfs \lambda$ is the Young subgroup of $\sn$ indexed by $\lambda$.
The power sum and monomial bases of $\Lambda_n$ correspond to
bases of $\trspace n$
which are not characters.
We call these the
{\em power sum}
$\{\psi^\lambda \,|\, \lambda \vdash n \}$
and {\em monomial}
$\{ \phi^\lambda \,|\, \lambda \vdash n \}$
traces, respectively.
These are
the bases related to the irreducible character bases
by the same matrices of character evaluations and
inverse Koskta numbers that relate
power sum and monomial symmetric functions to Schur functions,
\begin{equation}\label{eq:mforgotten}
  \begin{alignedat}{2}
    p_\lambda &= \sum_\mu \chi^\mu(\lambda) s_\mu, &\qquad
    \psi^\lambda &= \sum_\mu \chi^\mu(\lambda) \chi^\mu, 
    \\
    m_\lambda &= \sum_\mu K_{\lambda,\mu}^{-1} s_\mu, &\qquad
    \phi^\lambda &= \sum_\mu K_{\lambda,\mu}^{-1} \chi^\mu, 
  \end{alignedat}
\end{equation}
where $\chi^\mu(\lambda) \defeq \chi^\mu(w)$
for any $w \in \sn$ having $\ctype(w) = \lambda$.



\section{Immanants and totally nonnegative polynomials}\label{s:imm}

Each of the inequalities stated in Section~\ref{s:intro} may
be stated in terms of a polynomial in matrix entries. 
In particular, let $x = (x_{i,j})_{i,j \in [n]}$ be a matrix of $n^2$ indeterminates, and for 
$p(x) \in \cx \defeq \mathbb C[x_{i,j}]_{i,j \in [n]}$ and $A = (a_{i,j})$ an $n \times n$ matrix, define $p(A) = p(a_{1,1}, a_{1,2} , \dotsc, a_{n,n})$.
While few of the polynomial inequalities in Section~\ref{s:intro} specifically mention the symmetric group, all of them involve polynomials which are linear combinations of monomials of the form $\{ \permmon xw \,|\, w \in \sn \}$.  Following \cite{LittlewoodTGC}, \cite{StanPos}, we call such polynomials {\em immanants}.  Specifically, given $f: \sn \rightarrow \mathbb C$ define the \emph{$f$-immanant} to be the polynomial
\begin{equation}\label{eq:immdef}
  \imm f(x) \defeq \sum_{w \in \sn} f(w) \permmon xw \in \mathbb C[x].
\end{equation}
The sign character 
($w \mapsto (-1)^{\ell(w)}$)
immanant and trivial character 
($w \mapsto 1$)
immanant are the determinant and permanent,
\begin{equation*}
\det(x) = \sum_{w \in \sn} (-1)^{\ell(w)} \permmon xw,
\qquad
\perm(x) = \sum_{w \in \sn} \permmon xw.
\end{equation*}
Simple formulas for the induced sign and trivial character immanants 
are due to Littlewood--Merris--Watkins~\cite{LittlewoodTGC},
\cite{MerWatIneq},
\begin{equation}\label{eq:lmw}
  \begin{gathered}
    \imm{\epsilon^\lambda}(x) = 
\nTtksp
\sum_{(I_1, \dotsc, I_r)} 
\nTksp
\det(x_{I_1, I_1}) \cdots \det(x_{I_r, I_r}), 
\\
\imm{\eta^\lambda}(x) = 
\nTtksp
\sum_{(I_1, \dotsc, I_r)} 
\nTksp
\perm(x_{I_1, I_1}) \cdots \perm(x_{I_r, I_r}),
\end{gathered}
\end{equation}
where $\lambda = (\lambda_1, \dotsc, \lambda_r) \vdash n$ and
the sums are over all sequences of pairwise disjoint subsets of $[n]$ satisfying $|I_j| = \lambda_j$.
Call such a sequence an {\em ordered set partition} of $[n]$ 
{\em of type $\lambda$.} 

Some current interest in immanants and their connection to TNN matrices was inspired by Lusztig's work with canonical bases of quantum groups. (See, e.g., \cite{LusztigTPCB}.) In particular, one quantum group has an interesting basis whose elements can be described in terms of immanants which evaluate nonnegatively on TNN matrices. 
Call a polynomial $p(x)$ {\em totally nonnegative} (TNN) if $p(A) \geq 0$
whenever $A$ is a totally nonnegative matrix.  There is no known procedure to decide if a given polynomial is TNN.

The formula (\ref{eq:lmw}) makes it obvious that $\imm{\epsilon^\lambda}(x)$
and $\imm{\eta^\lambda}(x)$ are TNN polynomials for each partition
$\lambda \vdash n$.
A stronger result~\cite[Cor.\,3.3]{StemImm} asserts that irreducible character
immanants $\imm{\chi^\lambda}(x)$ are TNN as well.
It is clear that the $\sn$-trace immanants
\begin{equation*}
  \{ \imm{\theta}(x) \,|\,
  \theta \in \trspace n, \imm{\theta}(x) \text{ is TNN } \}
\end{equation*}
form a cone, i.e., are closed under real nonnegative linear combinations.
Stembridge has conjectured~\cite[Conj.\,2.1]{StemConj}
that the extreme rays of this cone are generated by the
monomial trace immanants
\begin{equation}\label{eq:monimm}
  \{ \imm{\phi^\lambda}(x) \,|\, \lambda \vdash n \},
\end{equation}
and has shown~\cite[Prop.\,2.3]{StemConj} that the cone of TNN $\sn$-trace immanants
lies inside of the cone generated by (\ref{eq:monimm}).
\begin{prop}
  Each immanant of the form $\imm{\theta}(x)$ with $\theta \in \trspace n$
  is a totally nonnegative polynomial only if it is equal to
  a nonnegative linear combination of monomial trace immanants.
\end{prop}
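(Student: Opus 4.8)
The plan is to use the fact that the monomial traces $\{\phi^\lambda \mid \lambda \vdash n\}$ form a basis of $\trspace n$ (see (\ref{eq:mforgotten})), so that $\theta$ may be written uniquely as $\theta = \sum_{\lambda \vdash n} c_\lambda \phi^\lambda$, and then to show that the hypothesis that $\imm\theta(x)$ is TNN forces $c_\lambda \geq 0$ for every $\lambda$. Since $f \mapsto \imm f$ is linear, it suffices to exhibit, for each $\lambda$, one totally nonnegative matrix $A_\lambda$ at which $\imm{\phi^\mu}$ vanishes for all $\mu \neq \lambda$ while $\imm{\phi^\lambda}(A_\lambda) > 0$; then $\imm\theta(A_\lambda) = c_\lambda\, \imm{\phi^\lambda}(A_\lambda) \geq 0$ yields $c_\lambda \geq 0$, and letting $\lambda$ range over all partitions finishes the proof.

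The separating matrices will be block-diagonal all-ones matrices. Fix $\lambda = (\lambda_1, \dotsc, \lambda_r) \vdash n$ and let $A_\lambda = J_{\lambda_1} \oplus \cdots \oplus J_{\lambda_r}$, where $J_m$ denotes the $m \times m$ matrix of $1$'s. First I would check that $A_\lambda$ is TNN: each $1 \times 1$ minor of $J_m$ equals $1$ and each larger minor vanishes, so $J_m$ is TNN, and a block-diagonal matrix with TNN diagonal blocks is TNN. Next, for $w \in \sn$ the monomial $\permmon aw$ evaluated at $A_\lambda$ equals $1$ when $w$ stabilizes each of the consecutive index blocks $\{1, \dotsc, \lambda_1\}$, $\{\lambda_1 + 1, \dotsc, \lambda_1 + \lambda_2\}$, $\dotsc$ (equivalently, when $w$ lies in the Young subgroup $\mfs{\lambda}$), and equals $0$ otherwise. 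Hence for every $f \colon \sn \to \mathbb C$,
\[
\imm f(A_\lambda) = \sum_{w \in \mfs{\lambda}} f(w).
\]

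It then remains to evaluate $\sum_{w \in \mfs{\lambda}} \phi^\mu(w)$. Writing $\phi^\mu = \sum_\nu K_{\mu,\nu}^{-1} \chi^\nu$ as in (\ref{eq:mforgotten}) and using Frobenius reciprocity together with the identity $\langle s_\nu, h_\lambda \rangle = K_{\nu,\lambda}$ (which, transported through the Frobenius isometry, reads $\langle \chi^\nu, \eta^\lambda \rangle = K_{\nu,\lambda}$ since $\eta^\lambda \leftrightarrow h_\lambda$ and $\chi^\nu \leftrightarrow s_\nu$), one gets $\sum_{w \in \mfs{\lambda}} \chi^\nu(w) = |\mfs{\lambda}|\, K_{\nu,\lambda}$, whence
\[
\imm{\phi^\mu}(A_\lambda) = \sum_{w \in \mfs{\lambda}} \phi^\mu(w) = |\mfs{\lambda}| \sum_\nu K_{\mu,\nu}^{-1} K_{\nu,\lambda} = |\mfs{\lambda}|\, \delta_{\mu,\lambda},
\]
the last equality because $(K_{\mu,\nu}^{-1})$ is the matrix inverse of the Kostka matrix $(K_{\mu,\nu})$. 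Since $|\mfs{\lambda}| = \lambda_1! \cdots \lambda_r! > 0$, this gives $\imm\theta(A_\lambda) = c_\lambda\, \lambda_1! \cdots \lambda_r!$; as $A_\lambda$ is TNN and $\imm\theta$ is TNN by hypothesis, $\imm\theta(A_\lambda) \geq 0$, so $c_\lambda \geq 0$, as required.

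I do not expect a genuine obstacle. The step most deserving of care is the identification of the right witnesses --- the all-ones block matrices $A_\lambda$, which collapse $\imm f$ to a sum of $f$ over a Young subgroup --- and then the evaluation, which rests only on the orthogonality $\sum_\nu K_{\mu,\nu}^{-1} K_{\nu,\lambda} = \delta_{\mu,\lambda}$, i.e.\ that inverse Kostka numbers invert Kostka numbers (equivalently, that $\{\eta^\lambda\}$ and $\{\phi^\mu\}$ are dual bases of $\trspace n$, mirroring $\langle h_\lambda, m_\mu \rangle = \delta_{\lambda,\mu}$ under the Frobenius isometry). Everything else is bookkeeping; in a final write-up one could instead observe directly that $c_\lambda = \langle \theta, \eta^\lambda \rangle$ and that $\imm\theta(A_\lambda) = |\mfs{\lambda}|\, \langle \theta, \eta^\lambda \rangle$ by Frobenius reciprocity, whichever is shorter.
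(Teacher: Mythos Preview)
The paper does not give its own proof of this proposition; it is stated with attribution to Stembridge~\cite[Prop.\,2.3]{StemConj}. Your argument is correct and is essentially Stembridge's original proof: one evaluates a trace immanant at the block-diagonal all-ones matrix $A_\lambda = J_{\lambda_1} \oplus \cdots \oplus J_{\lambda_r}$, observes that $\imm f(A_\lambda) = \sum_{w \in \mfs\lambda} f(w)$, and uses the duality $\langle h_\lambda, m_\mu \rangle = \delta_{\lambda,\mu}$ (equivalently your Kostka-inverse computation) to conclude that $\imm{\phi^\mu}(A_\lambda) = |\mfs\lambda|\,\delta_{\mu,\lambda}$, which isolates each coefficient $c_\lambda$. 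All the steps you flag---total nonnegativity of $A_\lambda$, the reduction of $\imm f(A_\lambda)$ to a Young-subgroup sum, and the identity $\sum_\nu K^{-1}_{\mu,\nu} K_{\nu,\lambda} = \delta_{\mu,\lambda}$---are standard and correctly justified.
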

Thus it is conjectured that an $\sn$-trace immanant is TNN if and only if it is equal to a nonnegative linear combination of monomial trace immanants. Indeed it is known that some monomial trace immanants generate
extremal
rays of the cone of TNN $\sn$-trace immanants~\cite[Thm.\.10.3]{CHSSkanEKL}.
(See Theorem~\ref{t:monimmtnn}.)

\section{The Temperley-Lieb algebra and 2-colorings}\label{s:tl2c}


Given a complex number $\xi$, we define the 
{\em Temperley-Lieb algebra} $T_n(\xi)$
to be the $\mathbb{C}$-algebra generated by elements
$t_1,\dotsc,t_{n-1}$ subject to the relations
\begin{alignat*}{2}
t_i^2 &= \xi t_i, &\qquad &\text{for } i=1,\dotsc,n-1, \\
t_i t_j t_i &= t_i,   &\qquad &\text{if }  |i-j|=1,\\
t_i t_j &= t_j t_i,   &\qquad &\text{if }  |i-j| \geq 2.
\end{alignat*}
When $\xi = 2$ we have the isomorphism $T_n(2) \cong \csn/(1 + s_1 + s_2 + s_1s_2 + s_2s_1 + s_1s_2s_1)$.
(See e.g.~\cite{FanMon}, \cite[Sec.\,2.1, Sec.\,2.11]{GHJ},
\cite[Sec.\,7]{WestburyTL}.) 
Specificallly, the isomorphism is given by
\begin{equation}\label{eq:sntotn}
\begin{aligned}
\sigma : \csn &\rightarrow T_n(2),\\
s_i &\mapsto t_i - 1. 
\end{aligned}
\end{equation}

Let $\K_n$ be the multiplicative monoid generated
by $t_1,\dotsc,t_{n-1}$ when $\xi = 1$, also called the standard basis of $\tn$.
It is known that $|\K_n|$ is
the $n$th Catalan number $C_n = \tfrac{1}{n+1}\tbinom{2n}{n}$.
Diagrams of the
basis elements of $T_n(\xi)$, made popular by 
Kauffman~\cite[Sec.\,4]{KauffState} are (undirected) graphs with $2n$ vertices and $n$ edges. 
The identity and generators $1, t_1, \dotsc, t_{n-1}$ are represented by
\begin{equation*}
  \raisebox{-6.25mm}{
    {\includegraphics[height=16mm]{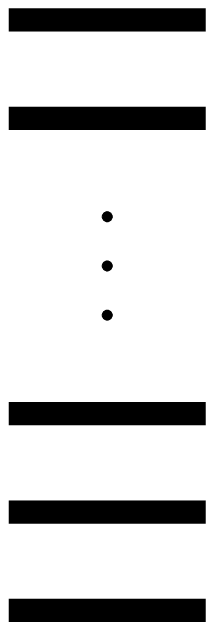}}, ~
    {\includegraphics[height=16mm]{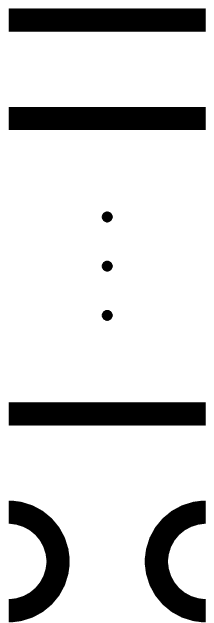}}, ~
    {\includegraphics[height=16mm]{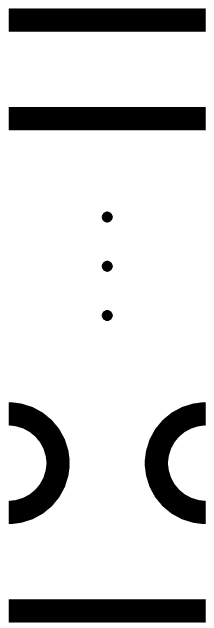}}, $\dotsc$, ~
    {\includegraphics[height=16mm]{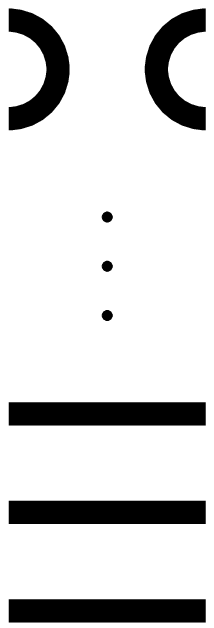}},}
\end{equation*}
and multiplication of these elements corresponds to concatenation
of diagrams, with cycles contributing a factor of $\xi$.
For instance, 
the fourteen basis elements of $T_4(\xi)$ are
\begin{equation*}
\raisebox{-3.1mm}{
  \includegraphics[height=10mm]{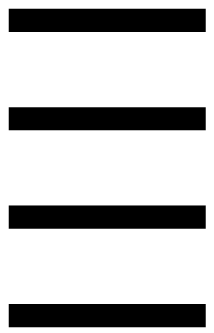}, ~
  \includegraphics[height=10mm]{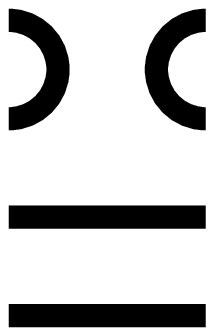}, ~
  \includegraphics[height=10mm]{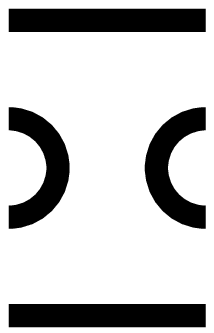}, ~
  \includegraphics[height=10mm]{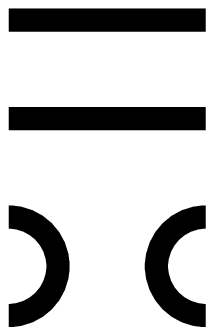}, ~
  \includegraphics[height=10mm]{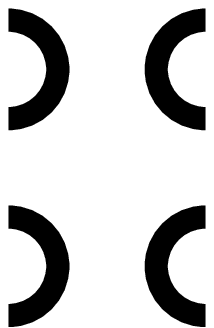}, ~
  \includegraphics[height=10mm]{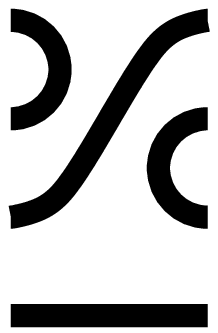}, ~
  \includegraphics[height=10mm]{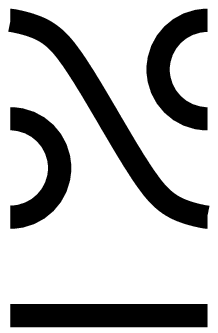}, ~
  \includegraphics[height=10mm]{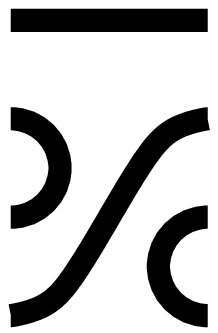}, ~
  \includegraphics[height=10mm]{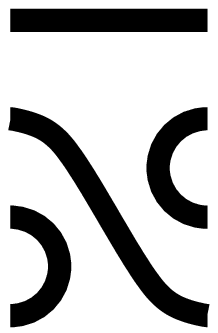}, ~
  \includegraphics[height=10mm]{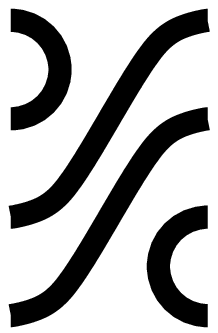}, ~
  \includegraphics[height=10mm]{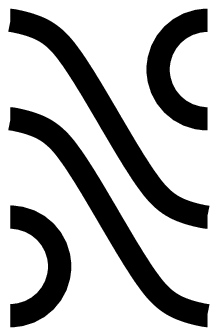}, ~
  \includegraphics[height=10mm]{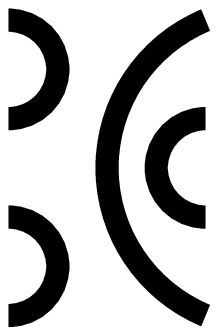}, ~
  \includegraphics[height=10mm]{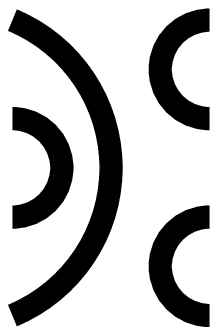}, ~
  \includegraphics[height=10mm]{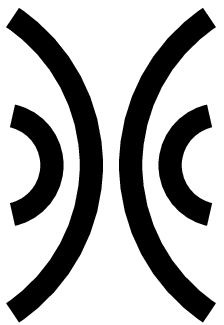},}\\
\end{equation*}
and the equality 
$t_3t_2t_3t_3t_1 = \xi t_1t_3$ in $T_4(\xi)$
is represented by
\begin{equation}\label{eq:kauffex}
  \raisebox{-3.1mm}{
\includegraphics[height=10mm]{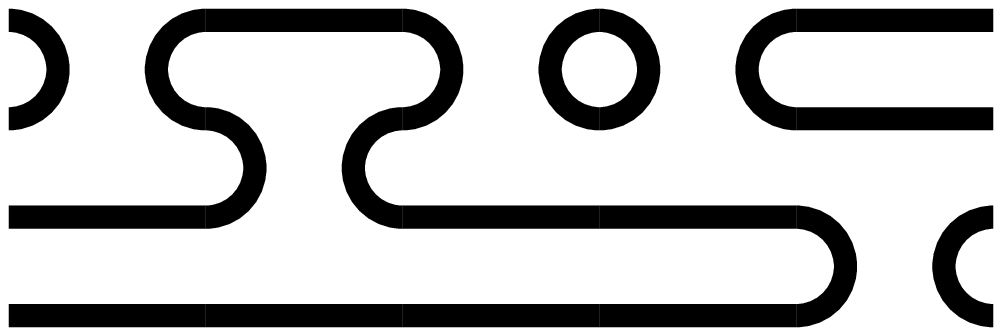}}
 = 
\xi
\raisebox{-3.1mm}{
  \includegraphics[height=10mm]{tl4_t1t3},}
\end{equation}
with the "bubble" becoming the scalar multiple $\xi$.
We will identify each element $\tau \in\K_n$ with its Kauffman diagram, and will 
label vertices $v_1, \dotsc, v_n$, clockwise from the lower left.  We define the height of a vertex by
\begin{equation*}
    \hgt(v_i) = \begin{cases}
      i &\text{if $1 \leq i \leq n$},\\
      2n+1-i &\text{if $n+1 \leq i \leq 2n$}.
      \end{cases}
\end{equation*}
For instance, (\ref{eq:tauhat}) shows the element $t_7t_6t_8t_5t_7t_4t_6t_5t_2 \in \K_9$ 
on the left, with vertex labels.
Vertices have heights $1,\dotsc,9$, from bottom to top.
It is easy to see that for all $\tau \in\K_n$, each edge $(v_i,v_j)$ satisfies
\begin{equation}\label{eq:heightdiff}
\hgt(v_i) - \hgt(v_j) = \begin{cases}
 1 ~(\mathrm{mod}~ 2) &\text{if $i,j \leq n$ or $i,j \geq n+1$},\\
 0 ~(\mathrm{mod}~ 2) &\text{otherwise}.
\end{cases}
\end{equation}

Define $\hat \tau$ to be the graph obtained from $\tau$ by adding edges $(i, 2n+1-i)$ for all $i$ (even if such an edge already exists).  Since each vertex in $\hat \tau$ has degree $2$, it is clear that this graph is a disjoint union of cycles.  For example, corresponding to the element $\tau$ on the left of (\ref{eq:tauhat}) we have $\hat \tau$ to its right, and the decomposition of this graph into four disjoint cycles.
\begin{equation}\label{eq:tauhat}
  \begin{gathered}
 \raisebox{-3.1mm}{
{\includegraphics[height=60mm]{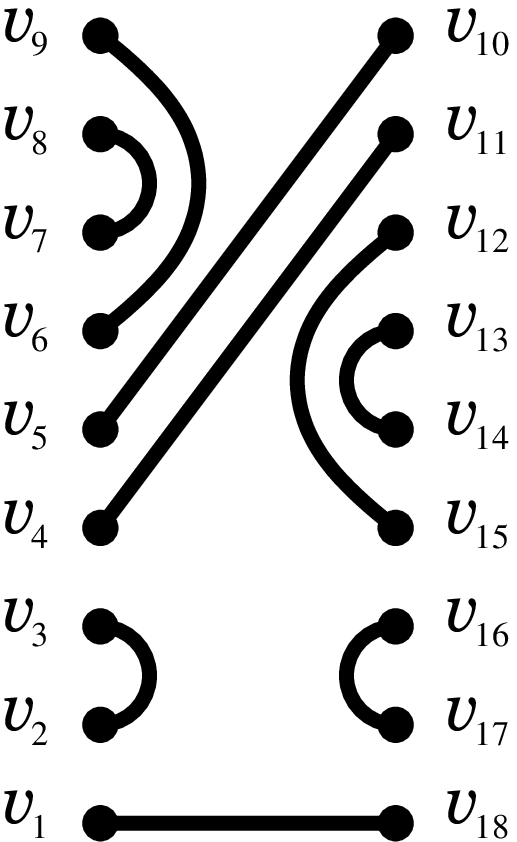}}} \\
\phantom \sum \tau \phantom \sum
\end{gathered}
\qquad\qquad
\begin{gathered}
\raisebox{-3.1mm}{
\includegraphics[height=60mm]{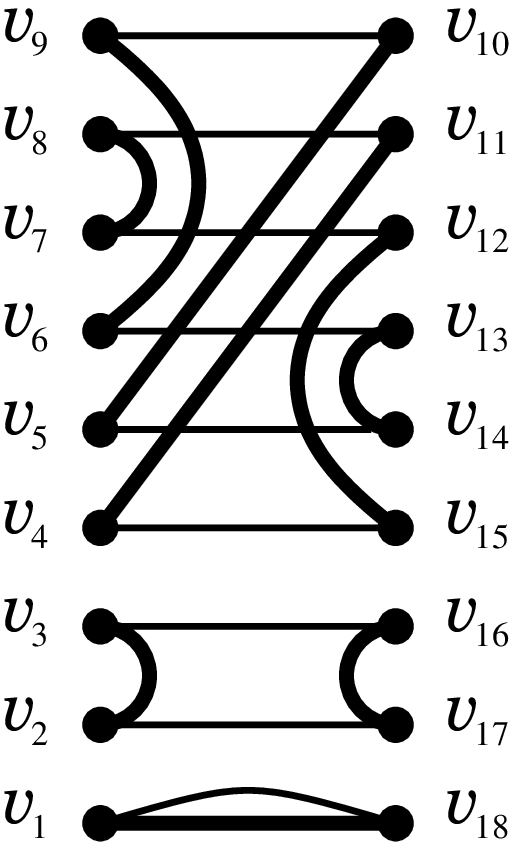}}\\
\phantom\sum \hat \tau \phantom \sum\end{gathered}
\qquad \qquad
\begin{gathered}
\raisebox{-3.1mm}{
\includegraphics[height=60mm]{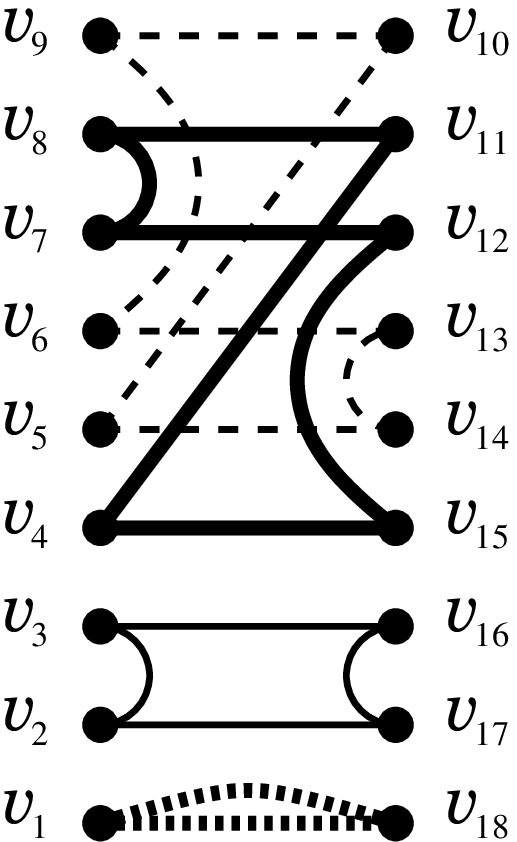}}\ .\\
\phantom\sum
\text{cycles of } \hat\tau\phantom\sum
\end{gathered}
\end{equation}

The Temperley-Lieb algebra $T_n(2)$ sometimes arises in the $2$-coloring
of combinatorial objects.
Define a {\em principal coloring} of $\tau \in \K_n$ to be a map $\kappa: \mathrm{vertices}(\tau) \rightarrow \{ \mathrm{black}, \mathrm{white} \}$ which is a {\em proper} coloring of $\hat \tau$, i.e., 
\begin{equation*}
\begin{alignedat}{2} 
\text{color}(v_i) &\neq \text{color}(v_{2n+1-i}) &\qquad  &\text{for $i=1,\dotsc,n$},\\
\text{color}(v_i) &\neq \text{color}(v_j) &\qquad &\text{if $v_i$ and $v_j$ are adjacent in $\tau$}.  
\end{alignedat}
\end{equation*}
Let $(\tau, \kappa)$ denote the graph $\tau$ with its vertices colored by $\kappa$.

Principal colorings of $\tau$ are closely related to cycles in $\hat\tau$.  It is clear that colors must alternate along any one cycle of $\hat\tau$.  It is also true that vertex colors alternate as one views the vertices in clockwise order, ignoring the edges of that cycle.
For example, consider a $2$-coloring of the cycle $(v_4, v_{11}, v_8, v_7, v_{12}, v_{15})$ of $\hat\tau$ in (\ref{eq:tauhat}),
\begin{equation*}
    \raisebox{-3.1mm}{
\includegraphics[height=30mm]{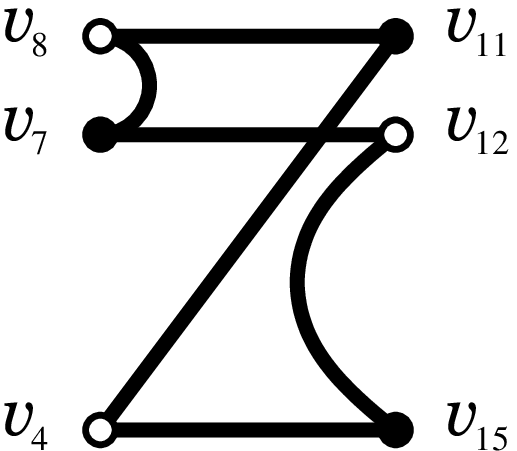}}.
\end{equation*}
\begin{prop}
If $\hat\tau$ is a single cycle, then there are two principal colorings of $\tau$. In each, vertices of odd index and of even index have opposite colors. 
\end{prop}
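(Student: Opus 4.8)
The plan is to identify the unique bipartition of the cycle $\hat\tau$ with the partition of its $2n$ vertices into those of odd index and those of even index; the proposition then follows from the elementary fact that a connected graph admitting a proper $2$-coloring admits exactly two of them.

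First I would verify that every edge of $\hat\tau$ joins a vertex of odd index to a vertex of even index. The edges $(v_i,v_{2n+1-i})$ adjoined in passing from $\tau$ to $\hat\tau$ have endpoint indices summing to $2n+1$, which is odd, so these join vertices of opposite parity. For an edge $(v_i,v_j)$ of $\tau$ itself I would substitute the definition of $\hgt$ into the height-difference formula (\ref{eq:heightdiff}) and run through its three cases: if $i,j\le n$ then $\hgt(v_i)-\hgt(v_j)=i-j$, which (\ref{eq:heightdiff}) declares odd; if $i,j\ge n+1$ then $\hgt(v_i)-\hgt(v_j)=j-i$, again odd; and if, say, $i\le n<j$ then $\hgt(v_i)-\hgt(v_j)=i+j-2n-1$, which (\ref{eq:heightdiff}) declares even, whence $i+j$ is odd. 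In each case $i$ and $j$ have opposite parity, as claimed. The same alternation of index parity along edges forces every cycle of $\hat\tau$ to have even length, which is consistent with $\hat\tau$ being a single cycle on $2n$ vertices.

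Next, since $\hat\tau$ is assumed to be a single cycle, it is connected and bipartite. By the previous paragraph, coloring the odd-index vertices black and the even-index vertices white is a proper $2$-coloring of $\hat\tau$, hence a principal coloring of $\tau$. A connected graph that has a proper $2$-coloring has exactly two, obtained from one another by interchanging the two colors; therefore $\tau$ has exactly two principal colorings, and in each of them the vertices of odd index and of even index receive opposite colors.

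I do not expect a genuine obstacle. The only computational content is the parity check of the first step, and the height-difference formula (\ref{eq:heightdiff}) supplies it almost verbatim once $\hgt$ is written out; the remaining ingredient is the standard fact about $2$-colorings of connected graphs.
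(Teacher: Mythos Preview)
Your argument is correct, and it takes a genuinely different route from the paper's. The paper begins with an arbitrary principal coloring of $\tau$ and shows that consecutive left-column vertices $v_i$, $v_{i+1}$ (for $i=1,\dotsc,n-1$) must receive opposite colors: it follows a subpath of the cycle $\hat\tau$ from $v_i$ to $v_{i+1}$ and, by summing height differences along this path using (\ref{eq:heightdiff}), deduces that the path has odd length. Your approach is more direct: rather than following paths, you verify edge-by-edge that every edge of $\hat\tau$ joins vertices of opposite index parity, which identifies the bipartition of $\hat\tau$ explicitly and then appeals to the standard fact that a connected bipartite graph has exactly two proper $2$-colorings. Both proofs rest on (\ref{eq:heightdiff}), but you apply it once per edge whereas the paper aggregates it along a path; your version is arguably cleaner and avoids the implicit step (left to the reader in the paper) of passing from the left-column alternation to alternation on all $2n$ vertices via the added edges $(v_i,v_{2n+1-i})$.
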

\begin{proof}
This is clear if $n=1$. Assume that $n\geq2$ and suppose we have a principal coloring of $\tau$. It suffices to show that color($v_i$)$~\neq~$color($v_{i+1}$) for $i=1,\dotsc,n-1.$ Consider a subpath of the cycle of $\hat\tau$ from $v_i$ to $v_{i+1}$. Since $v_{i}$, $v_{i+1}$ belong to the left column, there are an even number of edges on the path that switch columns. By (\ref{eq:heightdiff}), for each such edge $(v_{j},v_{k})$ we have that $\hgt(v_j)-\hgt(v_k)$ is even. Also by (\ref{eq:heightdiff}), since $\hgt(v_{i+1})-\hgt(v_i)=1$ there are an odd number of edges that do not switch columns. Thus the path consists of an odd number of edges and color($v_i$)$~\neq~$color($v_{i+1}$).
\end{proof}

It is easy to see that if $\kappa$ is a principal coloring of $\tau \in \K_n$ and if $\hat \tau$ is a single cycle, then we have
\begin{equation}\label{eq:lrdiff}
    | \# (\text{white vertices on left of } (\tau,\kappa)) - 
    \# (\text{white vertices on right of } (\tau,\kappa)) | = 
    \begin{cases}
      0 & \text{if $n$ even},\\
      1 & \text{if $n$ odd}.
      \end{cases}
      \end{equation}
In this situation we call $(\tau,\kappa)$ {\em balanced} if $n$ is even, and
{\em unbalanced} otherwise.
More specifically, we call $(\tau,\kappa)$ {\em left-unbalanced} ({\em right-unbalanced}) if it has more white vertices on the left (right).

Now consider $\tau \in \K_n$
with $\hat \tau$ a disjoint union of cycles $C_1,\dotsc, C_d$, and $\kappa$ a principal coloring of $\tau$.
Define
\begin{equation}\label{eq:alphabeta}
\begin{aligned}
\alpha &= \alpha(\tau,\kappa) \defeq \# \{ i \,|\, (\tau_{C_i}, \kappa) \text{ right unbalanced} \},\\   \beta &= \beta(\tau,\kappa) \defeq \# \{ i \,|\, (\tau_{C_i}, \kappa) \text{ left unbalanced} \}.
\end{aligned}
\end{equation}
For example, recall $\tau$ and $\hat\tau$ from (\ref{eq:tauhat}).  The proper $2$-coloring $\kappa$ of $\hat\tau$,
\begin{equation*}
\raisebox{-3.1mm}{
\includegraphics[height=60mm]{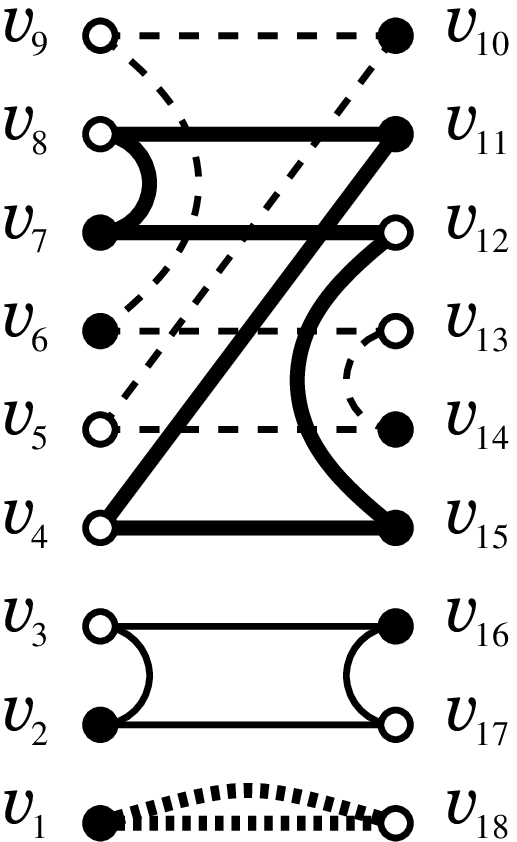}},
\end{equation*}
  corresponds to a principal coloring of $\tau$ which satisfies $\alpha(\tau,\kappa) = 1$, $\beta(\tau,\kappa) = 2$, and $d=4$. Also note that there is one balanced cycle.

It is easy to characterize the colorings $\kappa$ of a given Temperley-Lieb basis element $\tau$ for which the numbers $\alpha$, $\beta$ (\ref{eq:alphabeta}) are constant.
\begin{lem}
Let $(\tau, \kappa)$, $(\tau,\kappa')$ be principal colorings with $j$ white vertices on the left, for some $j$, $0 \leq j \leq \lfloor \tfrac n2 \rfloor$.
Then we have
\begin{equation*}
    \alpha(\tau,\kappa) = \alpha(\tau,\kappa'),
    \qquad
    \beta(\tau,\kappa) = \beta(\tau,\kappa').
\end{equation*}
\end{lem}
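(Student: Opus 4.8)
The plan is to derive closed formulas for $\alpha(\tau,\kappa)$ and $\beta(\tau,\kappa)$ in terms of $\tau$ and the single number $j$, which makes the lemma immediate. The starting point is that each cycle $C_i$ of $\hat\tau$ carries, at every one of its vertices, the column edge $(v_a,v_{2n+1-a})$ incident there; hence $V(C_i)$ is closed under $v_a\leftrightarrow v_{2n+1-a}$, and in particular $C_i$ has the same number $n_i$ of vertices in the left column as in the right. Consequently the restriction $\tau_{C_i}$, relabeled by the clockwise-from-lower-left convention on its $2n_i$ vertices, is an element of $\K_{n_i}$ whose hat-graph is the single cycle $C_i$ — this is exactly the single-cycle situation already invoked implicitly in (\ref{eq:alphabeta}). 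Writing $w_i^L$ and $w_i^R$ for the numbers of white vertices of $C_i$ on the left and on the right, (\ref{eq:lrdiff}) then gives $w_i^L-w_i^R\in\{-1,0,1\}$, and this difference equals $+1$, $0$, or $-1$ according as $(\tau_{C_i},\kappa)$ is left-unbalanced, balanced, or right-unbalanced.

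Next I would record two global counts. Since $\kappa$ properly $2$-colors the disjoint union of even cycles $\hat\tau$, exactly $n$ of the $2n$ vertices are white, so the number of white vertices on the right is $n-j$. Summing the per-cycle identity over $i$ and using that the balanced cycles contribute $0$, the $\beta$ left-unbalanced cycles contribute $+1$ each, and the $\alpha$ right-unbalanced cycles contribute $-1$ each, we get
\[
\beta(\tau,\kappa)-\alpha(\tau,\kappa)\;=\;\sum_{i=1}^{d}\bigl(w_i^L-w_i^R\bigr)\;=\;j-(n-j)\;=\;2j-n .
\]
For the second count, (\ref{eq:lrdiff}) shows that $C_i$ is balanced precisely when $n_i$ is even, a condition depending only on $\tau$; hence the number $b$ of balanced cycles is an invariant of $\tau$, and $\alpha(\tau,\kappa)+\beta(\tau,\kappa)=d-b$.

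Solving these two relations yields $\alpha(\tau,\kappa)=\tfrac12(d-b-2j+n)$ and $\beta(\tau,\kappa)=\tfrac12(d-b+2j-n)$, expressions in which $d$, $b$, and $n$ depend only on $\tau$. Thus any two principal colorings of $\tau$ with the same number $j$ of white vertices on the left have equal values of $\alpha$ and equal values of $\beta$, which is the statement of the lemma.

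I expect the only genuinely delicate step to be the reduction in the first paragraph: checking that deleting the vertices outside $C_i$ preserves non-crossingness and that, after relabeling, the column edges of $\tau_{C_i}$ are exactly the column edges of $C_i$, so that (\ref{eq:lrdiff}) legitimately applies cycle-by-cycle. This is a routine planarity-and-bookkeeping argument powered by the symmetry of $V(C_i)$; once it is in place, the rest is elementary arithmetic with the already-established facts (\ref{eq:lrdiff}) and the definitions in (\ref{eq:alphabeta}).
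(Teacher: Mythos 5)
Your proof is correct and follows essentially the same strategy as the paper's: both pin down $\alpha+\beta$ as an invariant of $\tau$ alone (the number of unbalanced cycles, equivalently cycles of cardinality $2 \bmod 4$) and pin down $\alpha-\beta$ via the count of white vertices on each side, then combine. You add a welcome bit of care in justifying the cycle-by-cycle application of (\ref{eq:lrdiff}) and push through to explicit formulas for $\alpha$ and $\beta$, but the two arguments are the same in substance.
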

\begin{proof}
  It is easy to see that the number of cycles of $\hat \tau$ of cardinality $2~ (\mathrm{mod}~4)$ is
  \begin{equation}\label{eq:absum}
      \alpha(\tau,\kappa) + \beta(\tau,\kappa) = 
      \alpha(\tau,\kappa') + \beta(\tau,\kappa').
  \end{equation}
  On the other hand, we have by assumption that the number of white vertices on the right of $(\tau,\kappa)$ (or of $(\tau,\kappa')$) minus the number of white vertices on the left is \begin{equation}\label{eq:njdiff}
  (n-j)-j = n - 2j.
  \end{equation}
  Since each balanced subgraph $(\tau_{C_i},\kappa)$ (or $(\tau_{C_i},\kappa')$) contributes the same number of white vertices to both sides 
  and each unbalanced subgraph contributes one more white vertex to one side than to the other (\ref{eq:lrdiff}),
  we see that (\ref{eq:njdiff}) equals
  \begin{equation}\label{eq:abdiff}
  \alpha(\tau,\kappa) - \beta(\tau,\kappa) = \alpha(\tau,\kappa') - \beta(\tau,\kappa').
  \end{equation}
  Combining (\ref{eq:absum}) and (\ref{eq:abdiff}), we have the desired equalities.
\end{proof}
\noindent
By this lemma, we may write
\begin{equation}\label{eq:newalpha}
\alpha(\tau,j) \defeq \alpha(\tau,\kappa)
\qquad 
(\beta(\tau,j) \defeq \beta(\tau,\kappa))
\end{equation}
if there exists a principal coloring of $\tau$ in which $j$ vertices on the left are white.

Just as $T_n(2)$ is related to $2$-coloring, it is related to total
nonnegativity of polynomials in the subspace
\begin{equation}\label{eq:tlspace}
  \spn_{\mathbb R} \{ \det(x_{I,I}) \det(x_{I^c,I^c}) \,|\, I \subseteq [n] \}
  \end{equation}
of immanants. We define an immanant $\imm{\tau}(x)$ for each
$\tau \in \K_n$ in terms of the function
\begin{equation}\label{eq:ftau}
\begin{aligned}
  f_\tau: \csn &\rightarrow \mathbb{R}\\
  w &\mapsto \text{ coefficient of $\tau$ in } \sigma(w),
    \end{aligned}
\end{equation}
(extended linearly).
To economize notation, we will write $\imm{\tau}$  instead of $\imm{f_\tau}$,
\begin{equation*}
\imm{\tau}(x) 
= \sum_{w \in \sn} f_\tau(w)x_{1,w_1} \cdots x_{n,w_n}.
\end{equation*}
For example, consider the case $n=3$ and $\tau = t_1 \in \K_3$.
Extracting the coefficients 
of $t_1$ in the expressions
\begin{equation*}
  \begin{gathered}
    \sigma(e) = 1, \qquad \sigma(s_1) = t_1 - 1, \qquad \sigma(s_2) = t_2 - 1,\\
    \sigma(s_1s_2) = (t_1 - 1)(t_2 - 1) = t_1t_2 - t_1 - t_2 + 1, \\
    \sigma(s_2s_1) = (t_2 - 1)(t_1 - 1) = t_2t_1 - t_1 - t_2 + 1, \\
    \sigma(s_1s_2s_1) = (t_1-1)(t_2-1)(t_1-1) = t_1 + t_2 - t_1t_2 - t_2t_1 - 1,
  \end{gathered}
\end{equation*}
(where we have used $t_1t_2t_1 = t_1$ and $t_1^2 = 2t_1$
to obtain the last expression), we have
$f_{t_1}(e) = 0$,
$f_{t_1}(s_1) = 1$,
$f_{t_1}(s_2) = 0$,
$f_{t_1}(s_1s_2) = -1$,
$f_{t_1}(s_2s_1) = -1$,
$f_{t_1}(s_1s_2s_1) = 1$,
and
\begin{equation*}
  \imm{t_1}(x) = x_{1,2}x_{2,1}x_{3,3}
  - x_{1,3}x_{2,1}x_{3,2}
  - x_{1,2}x_{2,3}x_{3,1}
  + x_{1,3}x_{2,2}x_{3,1}.
  \end{equation*}
Note that in the special case $\tau = 1$, the function $f_\tau$ 
maps a permutation $w$ to $(-1)^{\inv(w)}$.  Thus 
the determinant is a Temperley-Lieb immanant,
\begin{equation*}
\det(x) = \imm{1}(x).
\end{equation*}

It was shown in \cite{RSkanTLImmp} that Temperley-Lieb immanants
are a basis of the space (\ref{eq:tlspace}), and that they are TNN.
Furthermore, they are the extreme rays of the cone of TNN immanants in this
space~\cite[Thm.\,10.3]{RSkanTLImmp}.
\begin{prop}
  Each immanant of the form
  \begin{equation}\label{eq:sumofprodsof2minors}
    \imm f(x) = \sumsb{I,J \subseteq [n]\\ |I|=|J|} c_{I,J}
    \det(x_{I,J}) \det(x_{\ol I, \ol J})
  \end{equation}
  is a totally nonnegative polynomial if and only if it is equal to
  a nonnegative linear combination of Temperley-Lieb immanants.
\end{prop}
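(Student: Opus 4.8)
The plan is to deduce the proposition from three facts recalled above: the Temperley--Lieb immanants $\{\imm\tau(x)\,|\,\tau\in\K_n\}$ are TNN, they form a basis of the space (\ref{eq:tlspace}), and they generate the extreme rays of the cone of TNN immanants in the space (\ref{eq:tlspace}) \cite[Thm.\,10.3]{RSkanTLImmp}. As a preliminary step I would note that the immanants of the form (\ref{eq:sumofprodsof2minors}) are exactly the elements of the span (\ref{eq:tlspace}): the containment ``$\supseteq$'' is clear on taking $I=J$, and ``$\subseteq$'' holds because, by \cite{RSkanTLImmp}, the product $\det(x_{I,J})\det(x_{\ol I,\ol J})$ lies in the Temperley--Lieb span for every $I,J$ with $|I|=|J|$ (its expansion in the $\imm\tau$ being read off from the principal colorings of \S\ref{s:tl2c}). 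Granting this, the ``if'' direction is immediate: a nonnegative real combination of the TNN polynomials $\imm\tau(x)$ is TNN and lies in (\ref{eq:tlspace}), hence has the form (\ref{eq:sumofprodsof2minors}).

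For the ``only if'' direction, let $\mathcal C$ be the cone of TNN immanants contained in (\ref{eq:tlspace}); it suffices to show that $\mathcal C$ consists exactly of the combinations $\sum_\tau d_\tau\imm\tau(x)$ with all $d_\tau\geq0$, since uniqueness of coefficients in the basis $\{\imm\tau(x)\}$ then finishes the proof. Now $\mathcal C$ is closed, being the intersection of (\ref{eq:tlspace}) with the closed half-spaces $\{\,p\,|\,p(A)\geq0\,\}$ as $A$ ranges over TNN matrices; and $\mathcal C$ is pointed, for if $\imm f(x)$ and $-\imm f(x)$ are both TNN then $\imm f$ vanishes on the nonempty open set of totally positive matrices, so $\imm f\equiv0$ as a polynomial and hence $f\equiv 0$ by linear independence of the monomials $\{\permmon xw\,|\,w\in\sn\}$. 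A pointed closed convex cone in a finite-dimensional real vector space with only finitely many extreme rays is the conical hull of those rays; since by \cite[Thm.\,10.3]{RSkanTLImmp} the extreme rays of $\mathcal C$ are precisely the finitely many rays $\mathbb R_{\geq0}\imm\tau(x)$, $\tau\in\K_n$, the claim follows.

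An equivalent and more hands-on route, which is really what underlies the extreme-ray statement, is to construct a dual family of TNN matrices: for each $\tau\in\K_n$ a totally nonnegative matrix $A(\tau)$, built from a planar network realizing the Kauffman diagram of $\tau$ so that its products of complementary minors are governed by the principal colorings of \S\ref{s:tl2c}, satisfying $\imm{\tau'}(A(\tau))=\delta_{\tau,\tau'}$. Writing $\imm f=\sum_{\tau'}d_{\tau'}\imm{\tau'}$ and evaluating at $A(\tau)$ then gives $d_\tau=\imm f(A(\tau))\geq0$. Either way, the single nonroutine ingredient is this dual-matrix construction (equivalently, the extreme-ray theorem \cite[Thm.\,10.3]{RSkanTLImmp}), which is the combinatorial core of \cite{RSkanTLImmp}; granted it, the argument is elementary convex geometry and linear algebra.
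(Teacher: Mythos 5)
Your argument is correct and is the intended one: the paper omits the proof, presenting this proposition as an immediate consequence of the preceding sentence, which cites \cite[Thm.\,10.3]{RSkanTLImmp} for the facts that the Temperley--Lieb immanants are a TNN basis of (\ref{eq:tlspace}) and generate the extreme rays of the cone of TNN immanants there. Your convex-geometry argument (pointedness and closedness of the cone, plus Minkowski's theorem for pointed closed cones with finitely many extreme rays) is the standard way to pass from that extreme-ray statement to the conical-hull statement, and your preliminary identification of the span of immanants of the form (\ref{eq:sumofprodsof2minors}) with (\ref{eq:tlspace}) correctly fills in a step that the paper also takes for granted.
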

In fact each complementary product of minors is a $0$-$1$ linear
combination of Temperley-Lieb immanants~\cite[Prop.\,4.4]{RSkanTLImmp}.
\begin{thm}\label{t:oneprod}
For $I \subseteq [n]$ we have 
\begin{equation}
\det(x_{I,I})\det(x_{I^c,I^c})=\sum_{\tau \in \K_n}b_{\tau}\imm{\tau}(x),    
\end{equation}
where 
\begin{equation}
    b_{\tau}=\begin{cases}
      1 & \text{if there is a principal coloring of $\tau$ with $\{v_i\,|\,i \in I\}$ white, $\{v_i\,|\,i \in [n]\ssm I\}$ black}, \\
      0&\text{otherwise.}
    \end{cases}
\end{equation}
\end{thm}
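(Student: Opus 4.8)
The plan is to expand both sides of the claimed identity in the monomial basis $\{\permmon xw \mid w \in \sn\}$ of immanants and match coefficients. On the left, $\det(x_{I,I})\det(x_{I^c,I^c})$ is itself an immanant: writing $w$ in one-line notation, the monomial $\permmon xw$ appears with coefficient $(-1)^{\ell(w)}$ precisely when $w$ permutes $I$ among itself and $I^c$ among itself (i.e.\ $w \in \mfs I \times \mfs{I^c}$, the Young subgroup stabilizing the partition $\{I, I^c\}$), and with coefficient $0$ otherwise. On the right, by definition $\imm{\tau}(x) = \sum_w f_\tau(w)\permmon xw$, so the coefficient of $\permmon xw$ in $\sum_\tau b_\tau \imm{\tau}(x)$ is $\sum_{\tau \in \K_n} b_\tau f_\tau(w)$. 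Thus the theorem is equivalent to the purely algebraic statement in $\csn$ (or rather its image in $T_n(2)$): for each $w \in \sn$,
\begin{equation*}
  \sum_{\tau \in \K_n} b_\tau f_\tau(w) =
  \begin{cases}
    (-1)^{\ell(w)} & \text{if $w \in \mfs I \times \mfs{I^c}$},\\
    0 & \text{otherwise}.
  \end{cases}
\end{equation*}
Since $f_\tau(w)$ is the coefficient of $\tau$ in $\sigma(w)$, the left side is the coefficient sum $\sum_{\tau : b_\tau = 1} [\tau]\,\sigma(w)$; equivalently, if we let $\epsilon_I = \sum_{\tau : b_\tau = 1}\tau^* \in T_n(2)^*$ be the linear functional dual to the set of $\K_n$-basis elements admitting the prescribed $I$-coloring, we must show $\epsilon_I(\sigma(w))$ equals $(-1)^{\ell(w)}$ or $0$ according to whether $w$ stabilizes the partition $\{I, I^c\}$.

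The natural way to get a handle on $\epsilon_I$ is to realize it, in the diagram calculus, as a \emph{coloring count}. First I would observe that $b_\tau$ depends only on whether a \emph{single} fixed coloring (the one with $\{v_i : i \in I\}$ white) is a proper coloring of $\hat\tau$; this is governed by the cycle structure of $\hat\tau$ together with the height-parity constraint~\eqref{eq:heightdiff}, exactly as analyzed in the run-up to the preceding propositions and lemma. Next I would track how $b_\tau$ and $f_\tau(w)$ interact under the monoid structure: writing $\sigma(w) = \prod (t_{i_k} - 1)$ for a reduced word, expand the product and observe that a diagram $\tau$ with $b_\tau = 1$ arises exactly from certain "trajectories" of the $I$-coloring through the generators — a generator $t_i$ acting on a pair of like-colored vertices is forbidden by properness, while $t_i - 1$ contributes a sign. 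The key combinatorial claim I expect to need is: summing the signed contributions of all reduced-word expansions that produce some $b_\tau = 1$ diagram telescopes, leaving a contribution only when the coloring is preserved all the way through — which forces $w$ to fix the blocks $I$, $I^c$ (since a white vertex can never cross to a position that $I$ doesn't contain) — and in that surviving case the total sign is $(-1)^{\ell(w)}$.

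Rather than that head-on attack, the cleaner route — and the one I would actually pursue — uses \emph{linear independence / dimension}. The space \eqref{eq:tlspace} spanned by the complementary products $\det(x_{J,J})\det(x_{J^c,J^c})$ has the Temperley--Lieb immanants $\{\imm{\tau}\}_{\tau \in \K_n}$ as a basis (cited from \cite{RSkanTLImmp}), and the transition is unitriangular-type with $0$-$1$ entries; so to prove the displayed formula for the coefficients $b_\tau$ it suffices to verify it on a spanning set of linear functionals that separates the $\imm\tau$. Concretely, I would pair both sides against the "coloring evaluation" functionals: for each principal coloring $(\sigma, \lambda)$ of each $\sigma \in \K_n$, there is a functional counting (with the Kauffman bracket value $\xi = 2$ substituted) compatible closed loops when $\tau$ is stacked against $\sigma$ — this is precisely the pairing under which $\{\imm\tau\}$ is known to behave diagonally. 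Evaluating the left side $\det(x_{I,I})\det(x_{I^c,I^c})$ against such a functional reduces, by \eqref{eq:lmw} applied to $\lambda = (|I|, |I^c|)$ together with the standard fact that $\imm{\epsilon^\lambda}$ is the sum of complementary products over ordered set partitions refining nothing, to counting the colorings of $\sigma$ that are \emph{consistent} with the black/white split given by $I$; and that count is $1$ exactly when $b_\sigma = 1$. The main obstacle is this last compatibility computation: showing that "the $I$-split extends to a principal coloring of $\tau$" is the right condition requires carefully matching the loop-counting on the glued diagram $\tau \cdot \sigma^*$ with the proper-coloring condition on $\hat\tau$, i.e.\ checking that the parity constraint~\eqref{eq:heightdiff} is exactly what makes a two-coloring of $\hat\tau$ well defined, and that each closed loop in the glued diagram contributes a factor that collapses the sum $\sum_w (-1)^{\ell(w)}[\cdots]$ to the indicator $b_\tau$. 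Once that bracket-versus-coloring dictionary is nailed down, the identity follows by comparing the two expansions on the separating family of functionals and invoking the known basis property of $\{\imm\tau\}$.
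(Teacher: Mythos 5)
The paper does not prove Theorem~\ref{t:oneprod}; the sentence immediately preceding it cites the result as \cite[Prop.\,4.4]{RSkanTLImmp}, so there is no in-paper proof for you to match against.

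Your opening reduction is correct and is the right first step: $\det(x_{I,I})\det(x_{I^c,I^c}) = \sum_{w \in \mfs I \times \mfs{I^c}} (-1)^{\ell(w)}\permmon xw$, while the right side of the claimed identity has $\permmon xw$-coefficient $\sum_{\tau \in \K_n} b_\tau f_\tau(w)$; since $f_\tau(w)$ is by definition the coefficient of $\tau$ in $\sigma(w)$, the theorem is equivalent to the coefficient identity in $T_n(2)$ that you display. However, neither of your two continuations closes the gap. In the first route, the ``key combinatorial claim'' that the signed sum over subsets of a reduced word telescopes to $(-1)^{\ell(w)}$ precisely when $w$ stabilizes the pair $(I,I^c)$ \emph{is} the theorem; you announce it but do not establish it, and you never address the $\xi=2$ bubble factors: after expanding $\sigma(w)=(t_{i_1}-1)\cdots(t_{i_\ell}-1)$, a sub-monomial $\prod_{k\in S}t_{i_k}$ reduces to $2^m\tau$ for varying $m\geq 0$, so extracting the coefficient of a fixed $\tau$ in $\sigma(w)$ requires controlling a cancellation between the signs $(-1)^{\ell-|S|}$ and those powers of $2$, which your sketch never tracks. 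In the second route, you posit a ``coloring evaluation'' pairing under which $\{\imm\tau\}_{\tau\in\K_n}$ is diagonal; no such pairing is defined in the paper, and the natural candidate (the Markov trace pairing on $T_n(2)$) is not diagonal on the standard basis $\K_n$, so the route is not well posed as written. Both proposals are honest roadmaps that correctly identify, but leave open, the hard combinatorial step; as they stand neither constitutes a proof.
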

By (\ref{eq:lmw}) we have that for each two-part partition
$\lambda = (n-j,j)$ of $n$, the corresponding induced sign character immanant
belongs to (\ref{eq:tlspace}).
Furthermore, we have the following explicit expansion of these in terms of the
Temperley-Lieb immanant basis.
\begin{thm}\label{t:epsilontau}
For $j = 0,\dotsc,\lfloor \tfrac n2 \rfloor - 1$, we have
\begin{equation*}
    \imm{\epsilon^{n-j,j}}(x) = \sum_{\tau \in \K_n} d_{j,\tau} \imm{\tau}(x),
    \end{equation*}
    where $d_{j,\tau}$
    is the number of principal colorings of $\tau$ having $j$ white vertices on the left. Explicitly, assuming such a coloring exists, this is $2^{d-\alpha-\beta}\binom{\alpha+\beta}{\alpha}$, where
    $d=$ the number of cycles of $\hat\tau$, and
    $\alpha=\alpha(\tau,j)$, $\beta=\beta(\tau,j)$
    are defined as in (\ref{eq:newalpha}).
    \begin{proof}
    The combinatorial description follows immediately from Theorem~\ref{t:oneprod}. Now suppose $\hat\tau$ consists of cycles $C_1,\dotsc,C_d$ and consider the proper 2-coloring of these cycles that combine to form a principal coloring of $\tau$ having $j$ white vertices on the left. For each of the $d-\alpha-\beta$ balanced induced subgraphs $\tau_{C_i}$, both of the two possible colorings contribute $\frac{|C_i|}{2}$ white vertices to the left column of $\tau$. There are $2^{d-\alpha-\beta}$ colorings of the corresponding vertices. Besides these, each unbalanced subgraph $\tau_{C_i}$ must be colored so that it contributes more white vertices to the left or to the right. We choose $\alpha$ of these $\alpha+\beta$ unbalanced subgraphs to be right unbalanced in $\binom{\alpha+\beta}{\alpha}$ ways.
    \end{proof}
\end{thm}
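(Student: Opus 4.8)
The plan is to deduce Theorem~\ref{t:epsilontau} from Theorem~\ref{t:oneprod} together with the Littlewood--Merris--Watkins formula (\ref{eq:lmw}) and the counting lemma preceding (\ref{eq:newalpha}). First I would use (\ref{eq:lmw}) with $\lambda=(n-j,j)$ to write $\imm{\epsilon^{n-j,j}}(x)=\sum_{|I|=j}\det(x_{I,I})\det(x_{I^c,I^c})$, the sum running over all $j$-element subsets $I$ of $[n]$ (every ordered set partition of type $(n-j,j)$ is the pair $(I^c,I)$ for a unique such $I$, and the two minors in each summand commute). Expanding each summand by Theorem~\ref{t:oneprod} and interchanging the two sums, the coefficient of $\imm{\tau}(x)$ becomes the number of $j$-subsets $I$ of $[n]$ for which the left-column coloring with $\{v_i:i\in I\}$ white and $\{v_i:i\in[n]\ssm I\}$ black extends to a principal coloring of $\tau$.

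The observation that turns this into a count of colorings is that a principal coloring of $\tau$ is determined by its restriction to the left column $v_1,\dots,v_n$: the edge $(v_i,v_{2n+1-i})$ of $\hat\tau$ forces $v_{2n+1-i}$ to receive the opposite color from $v_i$, and as $i$ runs over $1,\dots,n$ these edges cover every right-column vertex exactly once. Hence each prescribed left-column coloring has at most one extension, the assignment $\kappa\mapsto\{i\le n:\kappa(v_i)=\text{white}\}$ is injective on principal colorings, and the coefficient of $\imm{\tau}(x)$ above equals the number $d_{j,\tau}$ of principal colorings of $\tau$ with exactly $j$ white left-column vertices. This yields the combinatorial description.

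For the explicit value of $d_{j,\tau}$, I would fix $\tau$ with $\hat\tau=C_1\sqcup\cdots\sqcup C_d$ and note that a principal coloring amounts to independently choosing one of the two proper $2$-colorings of each cycle. Each $C_i$ meets the two columns in equally many vertices (again by the mirror pairing), hence has even length, and by (\ref{eq:lrdiff}) and the lemma preceding (\ref{eq:newalpha}) it is balanced precisely when $|C_i|\equiv 0\ (\mathrm{mod}\ 4)$; writing $\alpha=\alpha(\tau,j)$ and $\beta=\beta(\tau,j)$, there are $\alpha+\beta$ unbalanced and $d-\alpha-\beta$ balanced cycles, and this split does not depend on which coloring realizing $j$ white left vertices we start from. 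The two colorings of a balanced cycle place the same number of white vertices on the left, whereas the two colorings of an unbalanced cycle differ by one there, one left-unbalanced and one right-unbalanced. Therefore the number of white left-column vertices of a principal coloring equals a fixed baseline (depending only on $\tau$) plus the number of unbalanced cycles colored left-unbalanced, so the value $j$ is attained exactly when $\beta$ of the $\alpha+\beta$ unbalanced cycles are left-unbalanced. Counting these, the balanced cycles give a free factor $2^{d-\alpha-\beta}$ and the unbalanced cycles give $\binom{\alpha+\beta}{\beta}=\binom{\alpha+\beta}{\alpha}$, so $d_{j,\tau}=2^{d-\alpha-\beta}\binom{\alpha+\beta}{\alpha}$.

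The part I expect to be most delicate is the bookkeeping in the last step: one must check that balanced cycles contribute a constant independent of the chosen coloring and that each left-unbalanced choice shifts the left white-count by exactly one, and only then invoke the preceding lemma to know that $\alpha(\tau,j)$ and $\beta(\tau,j)$ are genuinely functions of $j$ alone --- without that invariance the right-hand side would not even be well defined. A minor point to handle with care is the hypothesis $j\le\lfloor n/2\rfloor-1$, which guarantees $n-j>j$ so that $(n-j,j)$ is a two-part partition and, for $n$ even, rules out the factor-of-two overcount that would occur in (\ref{eq:lmw}) at $j=n/2$.
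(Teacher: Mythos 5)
Your proof is correct and follows essentially the same route as the paper's: derive the combinatorial description of $d_{j,\tau}$ from the LMW formula (\ref{eq:lmw}) together with Theorem~\ref{t:oneprod}, then obtain the closed form by coloring the cycles of $\hat\tau$ independently, with the $d-\alpha-\beta$ balanced cycles giving a free factor of $2$ each and the $\alpha+\beta$ unbalanced cycles forced to split as $\alpha$ right- and $\beta$ left-unbalanced. You simply spell out the step the paper labels ``immediate'' (unwinding LMW for two-part partitions, and the observation that a principal coloring is determined by its left-column restriction), which is a reasonable amount of detail to supply.
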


Combining (\ref{eq:etom}) and (\ref{eq:lmw}), we see that
monomial immanants $\imm{\phi^\mu}(x)$ indexed by
partitions of the form $\mu = 2^c 1^d \vdash n$ belong to the space
(\ref{eq:tlspace}) as well.
To expand these
in the Temperley--Lieb immanant basis,
we define for each $\mu = 2^c 1^d \vdash n$ the set $P(\mu)$
of all $\tau \in \K_n$ such that there exists a principal coloring of $\tau$
with $c+d$ white vertices on the left and
no principal coloring of $\tau$ with $c+d+1$ white vertices on the left.
\begin{thm}\label{t:monimmtnn}
  For $\mu = 2^c 1^d \vdash n$, we have
  that $\imm{\phi^\mu}(x)$ is a totally nonnegative polynomial. In particular we  have 
\begin{equation*}
    \imm{\phi^\mu}(x) = \sum_{\tau \in P(\mu)} b_{\mu,\tau} \imm\tau(x), 
\end{equation*}
where $b_{\mu,\tau}=2^{\# \text{cycles of } \hat\tau \text{ of cardinality } 0~(\text{mod }4)}$.
\end{thm}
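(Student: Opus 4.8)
\emph{Proof plan.} Write $m = \lfloor n/2 \rfloor$; since $\mu = 2^c1^d$ satisfies $2c+d = n$ we have $c \in \{0,\dotsc,m\}$ and $d = n-2c$. The traces $\epsilon^{n-j,j}$ and $\phi^{2^k1^{n-2k}}$, $j,k \in \{0,\dotsc,m\}$, span the same subspace of $\trspace n$, and all of their immanants lie in the space (\ref{eq:tlspace}) by the remarks preceding the theorem together with (\ref{eq:lmw}). The plan is to obtain the Temperley--Lieb expansion of $\imm{\phi^{2^c1^{n-2c}}}(x)$ by inverting the change of basis between these two families of traces and then invoking Theorem~\ref{t:epsilontau}. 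First I would make that change of basis explicit: since $(n-j,j)^\tr = 2^j1^{n-2j}$, equation (\ref{eq:etom}) and the Frobenius isomorphism give
\begin{equation*}
\epsilon^{n-j,j} \;=\; \sum_{k=0}^{j} \binom{n-2k}{j-k}\,\phi^{2^k1^{n-2k}}, \qquad j = 0,\dotsc,m,
\end{equation*}
where $M_{(n-j,j),\,2^k1^{n-2k}} = \binom{n-2k}{j-k}$ is read off from the tableau count in (\ref{eq:etom}): in a column-strict filling of the two-column shape with column heights $n-j$ and $j$ and content $2^k1^{n-2k}$, the two copies of each of the letters $1,\dotsc,k$ must occupy different columns (filling the short column up to height $k$), after which one chooses which $j-k$ of the remaining $n-2k$ letters lie in the short column. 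This system is lower unitriangular in $(j,k)$, hence invertible.

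Next I would put Theorem~\ref{t:epsilontau} in a convenient form. For $\tau \in \K_n$ every cycle of $\hat\tau$ has even cardinality; let $A(\tau)$ and $D(\tau)$ be the numbers of cycles of cardinality $\equiv 2$ and $\equiv 0$ modulo $4$, respectively. Using (\ref{eq:absum}) to identify $\alpha(\tau,j) + \beta(\tau,j)$ with $A(\tau)$, the relation $\alpha(\tau,j) - \beta(\tau,j) = n - 2j$ from the lemma preceding (\ref{eq:newalpha}), and the explicit formula in Theorem~\ref{t:epsilontau}, one rewrites that theorem as
\begin{equation*}
\imm{\epsilon^{n-j,j}}(x) \;=\; \sum_{\tau \in \K_n} 2^{D(\tau)}\binom{A(\tau)}{j - \tfrac{n-A(\tau)}2}\,\imm\tau(x), \qquad 0 \le j \le m-1,
\end{equation*}
under the convention $\binom{A}{i} = 0$ for $i \notin \{0,\dotsc,A\}$ (note $A(\tau) \equiv n \pmod 2$, so the lower index is an integer, and the coefficient equals the number of principal colorings of $\tau$ with $j$ white vertices on the left). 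The extreme case $j = m$ follows by the same argument, or by expanding the right-hand side of (\ref{eq:lmw}) for $\lambda = (n-m,m)$ via Theorem~\ref{t:oneprod}. Since this coefficient is nonzero exactly when $\tfrac{n-A(\tau)}2 \le j \le \tfrac{n+A(\tau)}2$, the maximum number of white vertices on the left over all principal colorings of $\tau$ equals $\tfrac{n+A(\tau)}2$.

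To conclude, I would verify that the candidate $\imm{\phi^{2^k1^{n-2k}}}(x) = \sum_{\tau : A(\tau) = n-2k} 2^{D(\tau)}\imm\tau(x)$ solves the triangular system of the first paragraph. Substituting it into $\sum_k \binom{n-2k}{j-k}\imm{\phi^{2^k1^{n-2k}}}(x)$ and exchanging the two sums, for each $\tau$ the inner sum over $k$ collapses to the single term $k = \tfrac{n-A(\tau)}2$ (which lies in $\{0,\dotsc,m\}$ because $0 \le A(\tau) \le n$ and $A(\tau) \equiv n \pmod 2$), and the result is exactly $\sum_\tau 2^{D(\tau)}\binom{A(\tau)}{j - (n-A(\tau))/2}\imm\tau(x) = \imm{\epsilon^{n-j,j}}(x)$. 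By unitriangularity the candidate is the true expansion, in particular for $k = c$. Finally, $\tau \in P(2^c1^d)$ iff the maximum number of white vertices on the left equals $c + d = n - c$, i.e.\ iff $\tfrac{n+A(\tau)}2 = n-c$, i.e.\ iff $A(\tau) = n - 2c = d$; hence the candidate is supported on $P(\mu)$ with coefficient $2^{D(\tau)}$, which is the asserted formula with $b_{\mu,\tau} = 2^{D(\tau)}$. Total nonnegativity of $\imm{\phi^\mu}(x)$ is then immediate, since every $\imm\tau(x)$ is TNN and every coefficient $2^{D(\tau)}$ is nonnegative.

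The conceptual content is that the $\phi$-to-$\epsilon$ transition and the coloring expansion of Theorem~\ref{t:epsilontau} carry the same triangular binomial structure, so the answer can be guessed and checked in essentially one line. The real work lies in the two bookkeeping inputs --- identifying the transition coefficient as $\binom{n-2k}{j-k}$, and confirming that the rewritten Theorem~\ref{t:epsilontau} remains valid at $j = \lfloor n/2 \rfloor$, which is just outside its stated range --- together with the observation that a diagram $\tau$ admits at most $\tfrac{n+A(\tau)}2$ white vertices on its left; I expect the $j = \lfloor n/2 \rfloor$ boundary case to be the main thing to pin down carefully.
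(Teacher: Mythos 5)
Your proof is correct, and it takes a genuinely different route from the paper's. The paper's proof imports a path-family result from an external reference (\cite[Thm.\,10.3]{CHSSkanEKL}) about coverings of the wiring diagram of a reduced word for $\tau$, translates path families to principal colorings via \cite[Prop.\,2.1]{SkanIneq}, and then argues directly that the relevant colorings have $\alpha = 0$, giving $b_{\mu,\tau} = 2^{d-\beta} = 2^{\#\{\text{cycles} \equiv 0 \bmod 4\}}$. You instead stay entirely inside the machinery the paper has already set up: you read off the transition coefficients $M_{(n-j,j),\,2^k1^{n-2k}} = \binom{n-2k}{j-k}$ (the same tableau count that appears in the paper's second proof of Theorem~\ref{t:fischer}), use the Frobenius correspondence to transport the unitriangular system $e_{(n-j,j)} = \sum_k \binom{n-2k}{j-k} m_{2^k1^{n-2k}}$ to traces, rewrite Theorem~\ref{t:epsilontau} in terms of the cycle statistics $A(\tau), D(\tau)$, and then guess-and-check the solution. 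Your identification of $\alpha + \beta$ with the number of cycles of cardinality $\equiv 2 \pmod 4$ and $\alpha - \beta = n - 2j$ is correct (these are (\ref{eq:absum}) and (\ref{eq:abdiff})--(\ref{eq:njdiff})), and the characterization of $P(\mu)$ as $\{\tau : A(\tau) = d\}$ via the maximum $\frac{n+A(\tau)}{2}$ of white left vertices is right. What your route buys is self-containment: you avoid both external citations. What it costs is the small boundary extension of Theorem~\ref{t:epsilontau} to $j = \lfloor n/2 \rfloor$, which you correctly flag; since the combinatorial description in that theorem comes directly from Theorem~\ref{t:oneprod} with no restriction on $|I|$, the extension is immediate and poses no difficulty.
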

\begin{proof}
Let $t_{i_1}\cdots t_{i_\ell}$ be an expression for $\tau$ which is as short as possible, and let $G$ be the wiring diagram of $s_{i_1}\cdots s_{i_\ell}$. By \cite[Thm.\,10.3]{CHSSkanEKL}, the coefficient $b_{\mu,\tau}$ is the number of path families covering $G$ which satisfy 
\begin{enumerate}
    \item $c+d$ pairwise nonintersecting paths are colored white, 
    \item $c$ pairwise nonintersecting paths are colored black, 
    \end{enumerate} 
assuming no path family covering $G$ can be colored
so that $c+d+1$ nonintersecting paths are colored white.
It is straightforward to show \cite[Prop.\,2.1]{SkanIneq} that
such path families correspond bijectively to principal colorings
of $\tau$ with $c+d$ white and $c$ black vertices on the left,
assuming that no principal coloring of $\tau$ has $c+d+1$
white vertices on the left.
    
    Let $(\tau,\kappa)$ be a principal coloring with $\alpha(\tau,\kappa)>0$. Let $(\tau,\kappa')$ be a coloring obtained from $(\tau,\kappa)$ by switching the color of each vertex in one right unbalanced subgraph of $(\tau,\kappa)$. In $(\tau,\kappa')$ the total number of white vertices on the left is one bigger than in $(\tau,\kappa)$. Thus, for principal colorings described above $\alpha(\tau,\kappa)=0$ and by Theorem~\ref{t:epsilontau} we have $b_{\mu,\tau}=2^{d- \beta}$. Since $\alpha=0$, we have that $d-\beta=\# \text{cycles of } \hat\tau \text{ of cardinality } 0~(\text{mod }4)$, as desired.
\end{proof}

\section{Main results}\label{s:mn}

Fischer's inequalities (\ref{eq:fischer}) naturally lead to the questions of how products
\begin{equation}\label{eq:prodminors}
\det(A_{I,I})\det(A_{I^c,I^c})
\end{equation}
of complementary pairs of minors compare to one another, and of whether a greater cardinality difference $|I^c| - |I|$ tends to make the product (\ref{eq:prodminors}) greater or smaller.
This second question led Barrett and Johnson~\cite{BJMajor} to consider the average value of such products when cardinalities are fixed,
\begin{equation}\label{eq:avg2}
    \frac 1{\binom nk} \sumsb{I \subseteq [n]\\|I| = k}
    \det(A_{I,I})\det(A_{I^c,I^c}).
    \end{equation}
    They found that for PSD matrices, a smaller cardinality difference makes the average product of minors greater~\cite[Thm.\,1]{BJMajor}.
    We give two proofs that the same is true for TNN matrices.

\begin{thm}\label{t:fischer} For all TNN matrices A and for $k = 0,\dotsc, \lfloor \tfrac n2 \rfloor - 1,$ we have
\begin{equation}\label{eq:fisher}
    \frac{\displaystyle{\sum}_{|I|=k} \det(A_{I,I}) \det(A_{I^c,I^c})}{\displaystyle{\binom nk}}
    \leq
    \frac{\displaystyle{\sum}_{|I|=k+1} \det(A_{I,I}) \det(A_{I^c,I^c})}{\displaystyle{\binom n{k+1}}}.
    \end{equation}
\end{thm}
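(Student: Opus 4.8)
The plan is to reduce the inequality (\ref{eq:fisher}) to a statement about Temperley--Lieb immanants, where it becomes a combinatorial counting identity that can be checked termwise. By Theorem~\ref{t:oneprod}, each product $\det(A_{I,I})\det(A_{I^c,I^c})$ is a nonnegative integer combination of the TL immanants $\imm\tau(A)$, so both sums in (\ref{eq:fisher}) lie in the span (\ref{eq:tlspace}) and can be written as $\sum_{\tau\in\K_n} c^{(k)}_\tau \imm\tau(A)$ for explicit nonnegative coefficients $c^{(k)}_\tau$. In fact, summing Theorem~\ref{t:oneprod} over all $I$ with $|I|=k$ gives exactly the coefficients appearing in Theorem~\ref{t:epsilontau}: the coefficient of $\imm\tau$ in $\sum_{|I|=k}\det(A_{I,I})\det(A_{I^c,I^c})$ is $d_{k,\tau}$, the number of principal colorings of $\tau$ with $k$ white vertices on the left. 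So (\ref{eq:fisher}) is equivalent to
\begin{equation*}
  \sum_{\tau\in\K_n}\Bigl(\tfrac{d_{k+1,\tau}}{\binom{n}{k+1}}-\tfrac{d_{k,\tau}}{\binom nk}\Bigr)\imm\tau(A)\ \geq\ 0,
\end{equation*}
and since each $\imm\tau(A)\geq 0$ on TNN matrices (\cite{RSkanTLImmp}), it suffices to prove the purely combinatorial inequality $d_{k+1,\tau}/\binom{n}{k+1}\geq d_{k,\tau}/\binom nk$ for every fixed $\tau\in\K_n$.

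Next I would plug in the closed form from Theorem~\ref{t:epsilontau}: writing $d$ for the number of cycles of $\hat\tau$ and $\alpha=\alpha(\tau,k)$, $\beta=\beta(\tau,k)$, we have $d_{k,\tau}=2^{d-\alpha-\beta}\binom{\alpha+\beta}{\alpha}$ when a coloring with $k$ white vertices on the left exists, and $0$ otherwise. The quantity $m\defeq\alpha+\beta$ — the number of cycles of $\hat\tau$ of cardinality $2\pmod 4$ — is independent of $k$ (it is an invariant of $\tau$), and as $k$ increases by one, $\alpha$ decreases by one and $\beta$ increases by one (moving a white vertex from right to left, as in the proof of Theorem~\ref{t:monimmtnn}), while $2^{d-m}$ stays fixed. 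So the ratio $d_{k,\tau}/d_{k+1,\tau}$, when both are nonzero, equals $\binom{m}{\alpha}/\binom{m}{\alpha-1}=(m-\alpha+1)/\alpha$, and the claim $d_{k+1,\tau}\binom nk\geq d_{k,\tau}\binom n{k+1}$ reduces to a comparison of binomial coefficients. One also has to track the allowed range of $k$: for a given $\tau$ the values of $k$ admitting a principal coloring form a contiguous interval, and $\alpha$ ranges over an arithmetic progression on that interval, so the boundary cases (where $d_{k,\tau}=0$ but $d_{k+1,\tau}\neq0$, or vice versa) must be handled — but when $d_{k,\tau}=0$ the inequality is automatic, and I would rule out $d_{k,\tau}\neq0=d_{k+1,\tau}$ on the relevant range $k\leq\lfloor n/2\rfloor-1$ by the balancing argument.

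The heart of the matter is thus the inequality $\binom{m}{\alpha}\,\binom n{k+1}\ \leq\ \binom{m}{\alpha-1}\,\binom nk$ for the $\tau$-dependent parameters, or equivalently $\alpha/(m-\alpha+1)\leq(n-k)/(k+1)$. I would prove this by relating $\alpha$, $\beta$, $k$, $n$ through the identities in the proof of the Lemma preceding (\ref{eq:newalpha}): the number of white vertices on the right minus the left is $n-2k=\alpha-\beta$, and $\alpha+\beta=m\leq$ (something controlled by $n$). From $n-2k=\alpha-\beta$ and $\alpha+\beta=m$ we get $\alpha=\tfrac{m+n}{2}-k$ and $m-\alpha+1 = \beta+1 = k-\tfrac{n-m}{2}+1$, so the desired inequality becomes $\bigl(\tfrac{m+n}{2}-k\bigr)(k+1)\leq\bigl(k-\tfrac{n-m}{2}+1\bigr)(n-k)$, which expands and simplifies to $0\leq (n-2k)(\text{nonnegative})$ using $m\leq n$ and $k\leq\lfloor n/2\rfloor-1$. \textbf{The main obstacle} I anticipate is not this final algebraic verification, which is routine, but rather carefully establishing the bookkeeping: that $d_{k,\tau}$ as a function of $k$ is log-concave-compatible with $\binom nk$ on the right range, that the index $m$ really is $k$-independent, and that the degenerate cases at the ends of the admissible interval for $k$ cannot break the inequality. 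I would also present the alternative (second) proof promised in the text, presumably a direct argument on ordered set partitions or a double-counting of pairs $(I,\text{extra element})$ that interpolates between the $k$ and $k{+}1$ sums — but the TL-immanant route above is the one I would write out in full.
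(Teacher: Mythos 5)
Your strategy coincides with the paper's first proof: expand both sides in the Temperley--Lieb basis via Theorems~\ref{t:oneprod} and~\ref{t:epsilontau}, use the $k$-independence of $m=\alpha+\beta$ and the relation $\alpha-\beta=n-2k$ to get a closed form, and reduce to a per-$\tau$ binomial inequality. Your stated target $\binom m\alpha\binom n{k+1}\leq\binom m{\alpha-1}\binom nk$ is the right one, but the ``or equivalently'' rewrite has the inequality flipped. Dividing gives
\[
\frac{\binom m\alpha}{\binom m{\alpha-1}}=\frac{m-\alpha+1}{\alpha}\;\leq\;\frac{\binom nk}{\binom n{k+1}}=\frac{k+1}{n-k},
\]
i.e.\ $\alpha(k+1)\geq(\beta+1)(n-k)$, not $\alpha/(m-\alpha+1)\leq(n-k)/(k+1)$ as you wrote; the error then propagates into your next display. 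The correct expansion is
\[
\alpha(k+1)-(\beta+1)(n-k)=(n-2k-1)(k-\beta)=\tfrac{(n-m)(n-2k-1)}{2}\;\geq\;0,
\]
so the relevant factor is $n-2k-1$ rather than $n-2k$; this matches the paper's computation culminating in the simplification of (\ref{eq:abgamma}). These are fixable slips rather than conceptual gaps, and your handling of the degenerate cases (including ruling out $d_{k,\tau}\neq 0=d_{k+1,\tau}$ in the range $k\leq\lfloor n/2\rfloor-1$) is correct. Finally, the paper's second proof is not the ordered-set-partition double count you guess at the end: it expands $(n-k)e_{(k,n-k)}-(k+1)e_{(k+1,n-k-1)}$ in the monomial symmetric function basis, shows the coefficients are nonnegative by counting column-strict tableaux of two-row shape, and then applies Theorem~\ref{t:monimmtnn} to deduce total nonnegativity of the corresponding combination of monomial trace immanants.
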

\begin{proof}[First proof]
By (\ref{eq:lmw}) it is equivalent to show that the polynomial
\begin{equation}\label{eq:ediff}
    \frac{\imm{\epsilon^{n-k-1,k+1}}(x)}{\displaystyle{\binom n{k+1}}} - 
    \frac{\imm{\epsilon^{n-k,k}}(x)}{\displaystyle{\binom nk}}
\end{equation}
is totally nonnegative.  By Theorem~\ref{t:epsilontau}, this difference belongs to the span of the Temperley-Lieb immanants.  In particular, we may multiply the difference by $n!/(k!(n-k-1)!)$ to obtain
\begin{equation}\label{eq:fishertlnofrac}
    (k+1)\imm{\epsilon^{n-k-1,k+1}}(x) - 
    (n-k)\imm{\epsilon^{n-k,k}}(x) 
    = \sum_{\tau \in \K_n} c_\tau \imm \tau (x),
\end{equation}
where 
\begin{equation}\label{eq:ddc}
c_\tau = (k+1)d_{k+1,\tau} - (n-k)d_{k,\tau}.
\end{equation}
and $d_{k+1,\tau}$, $d_{k,\tau}$ are defined in terms of proper colorings of $\tau$ as in Theorem~\ref{t:epsilontau} 

We claim that $c_\tau \geq 0$.
Suppose we have a proper coloring of $\tau$ in which $k$ vertices on the left are white. Let $\alpha = \alpha(\tau,k)$ and $\beta = \beta(\tau,k)$ be the numbers of right-unbalanced and left-unbalanced subgraphs of $\tau$ respectively, as in \ref{eq:newalpha}.

Let $d$ be the number of cycles defined as in Theorem~\ref{t:epsilontau}. Then by Theorem~\ref{t:epsilontau} we have that (\ref{eq:ddc}) equals
\begin{equation}\label{eq:abgamma}
    \begin{aligned}
    (k+1)d_{k+1,\tau} - (n-k)d_{k,\tau} &=
    (k+1)2^{d-\alpha-\beta}\binom{\alpha+\beta}{\alpha-1}
    -(n-k)2^{d-\alpha-\beta}\binom{\alpha+\beta}{\alpha} \\
    &= 2^{d-\alpha-\beta}\left( (k+1)\binom{\alpha+\beta}{\alpha-1} - (n-k) \binom{\alpha+\beta}{\alpha} \right)\\
    &= 2^{d-\alpha-\beta}\frac{(\alpha+\beta)!}{(\alpha-1)!\beta!} \left( \frac{k+1}{\beta+1} - \frac{n-k}{\alpha} \right).
    \end{aligned}
\end{equation}
Then substituting $\alpha = n - 2k + \beta$ (see formula \ref{eq:njdiff}), the final difference of fractions (\ref{eq:abgamma}) becomes
\begin{equation*}
    \frac{(k+1)(n-2k+\beta) - (n-k)(\beta+1)} {(\beta+1)(n-2k+\beta)}.
\end{equation*}
This denominator is clearly positive.  The numerator is $(n-2k - 1)(k-\beta)$,
which is nonnegative because of the bounds on $k$ and the definition of $\beta$.
\end{proof}


\begin{proof}[Second proof]
Again we show that the polynomial (\ref{eq:ediff}) is TNN.
  Multiplying the polynomial by
  $\tfrac{n!}{k!(n-k-1)!}$ and expanding it in the monomial immanant
  basis of the trace immanant space, we have
  \begin{equation}\label{eq:fisherpoly2}
    (n-k)\imm{\epsilon^{(k,n-k)}}(x) -
    (k+1)\imm{\epsilon^{(k+1,n-k-1)}}(x) = \sum_{\mu \vdash n} c_{\mu}\imm{\phi^\mu}(x)
  \end{equation}
  where the integers $\{ c_{\mu} \,|\, \mu \vdash n \}$ satisfy
  \begin{equation}\label{eq:efisher}
    (n-k)e_{(k,n-k)} - (k+1)e_{(k+1,n-k-1)} = \sum_{\mu \vdash n} c_{\mu}m_\mu.
  \end{equation}

  We claim that $c_{\mu} \geq 0$ for all relevant partitions $\mu$.
  To see this, consider the special case of (\ref{eq:etom}) 
  for two-part partitions,
  \begin{equation*}
    e_{(k,n-k)} = \sum_{a=0}^{n-k} M_{(k,n-k),2^a1^{n-2a}} m_{2^a1^{n-2a}}.
  \end{equation*}
  It follows that each partition $\mu$ appearing with nonzero coefficient in (\ref{eq:fisherpoly2})
  has the form $\mu = 2^a1^{n-2a}$.
  
  Since each column-strict Young tableaux of shape $(k,n-k)$ and content
  $2^a 1^{n-2a}$ contains the letters $1, 1, 2, 2, \dotsc, a, a$ in rows
  $1, \dotsc, a$, it is completely determined by the subset of
  the
  letters $\{a+1, \dotsc, n-a\}$
  appearing in rows $a+1,\dotsc,n-k$ of column $2$.
  Thus there are $\tbinom{n-2a}{n-k-a} = \tbinom{n-2a}{k-a}$ such tableaux.
  It follows that in the right-hand side of (\ref{eq:efisher}),
  the coefficient of $m_{2^a1^{n-2a}}$ is
  \begin{equation*}
    c_{2^a1^{n-2a}} = \begin{cases}
      (n-k)\binom{n-2a}{k-a} 
      &\text{if $a = n-k$,}\\
      (n-k)\binom{n-2a}{k-a} - (k+1)\binom{n-2a}{k-a+1}
      &\text{if $0 \leq a \leq n-k-1$.}
    \end{cases}
  \end{equation*}
This last expression is equal to
  \begin{multline*}
    \frac{(n-2a)!}{(k-a)!(n-k-a-1)!}
      \bigg( \frac{n-k}{n-k-a} - \frac{k+1}{k-a+1} \bigg)\\ 
      = \frac{(n-2a)!}{(k-a)!(n-k-a-1)!}
      \bigg( \frac{2k+1-n}{(n-k-a)(k-a+1)} \bigg),
  \end{multline*}
  which is positive, since
  $a < n-k \leq \lfloor \tfrac n2 \rfloor \leq k \leq n$.

  Now by Theorem~\ref{t:monimmtnn}
  we have that
  (\ref{eq:fisherpoly2})
  is a totally nonnegative polynomial, and that
  (\ref{eq:ediff}) is as well.
\end{proof}
  
Consideration of the average value of products of two complentary minors (\ref{eq:avg2}) naturally led Barrett and Johnson to consider average values of products of arbitrarily many minors having fixed cardinality sequences $\lambda = (\lambda_1,\dotsc,\lambda_r) \vdash n$, 
\begin{equation}\label{eq:avgr}
\frac{1}{\binom{n}{\lambda_1,\dotsc,\lambda_r}}
\sum_{(I_1,\dotsc,I_r)}\det(A_{I_1,I_1}) \cdots \det(A_{I_r,I_r}),
\end{equation}
where the sum is over ordered set partitions of $[n]$ of type $\lambda$ and the multinomial coefficient $\binom{n}{\lambda_1,\dotsc,\lambda_r} = \tfrac{n!}{\lambda_1! \cdots \lambda_r!}$ is the number of such ordered set partitions.  They found that for PSD matrices, partitions which appear lower in the majorization order lead to greater average products~\cite[Thm.\,2]{BJMajor}.  We show that the same is true for TNN matrices.  
  
\begin{thm}\label{t:main}
Fix $n > 0$ and partitions $\lambda = (\lambda_1,\dotsc,\lambda_r) \vdash n$,
$\mu = (\mu_1 \dotsc, \mu_s) \vdash n$ with $\lambda \preceq \mu$.
For all TNN matrices we have
  \begin{equation}\label{eq:majorfischerpoly}
\lambda_1!\cdots\lambda_r!
\nTksp\sum_{(I_1,\dotsc,I_r)}\nTksp
\det(A_{I_1,I_1}) \cdots \det(A_{I_r,I_r}) ~\geq~
  \mu_1!\cdots\mu_s!
  \nTksp\sum_{(J_1,\dotsc,J_s)}\nTksp
  \det(A_{J_1,J_1}) \cdots \det(A_{J_s,J_s}),
  \end{equation}
\end{thm}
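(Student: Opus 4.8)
The plan is to derive the general inequality from its two-part special case, Theorem~\ref{t:fischer}, by a block-decomposition argument. By the Littlewood--Merris--Watkins formula~(\ref{eq:lmw}), inequality~(\ref{eq:majorfischerpoly}) is equivalent to the statement that the polynomial $\lambda_1!\cdots\lambda_r!\,\imm{\epsilon^\lambda}(x) - \mu_1!\cdots\mu_s!\,\imm{\epsilon^\mu}(x)$ is totally nonnegative. First I would reduce to the case in which $\mu$ \emph{covers} $\lambda$: since any two comparable elements of a finite poset are joined by a saturated chain, we may pick a chain $\lambda = \nu^{(0)} \lessdot \nu^{(1)} \lessdot \cdots \lessdot \nu^{(m)} = \mu$ in the majorization order, and if~(\ref{eq:majorfischerpoly}) holds for a fixed TNN matrix $A$ at each covering step $\nu^{(\ell)} \lessdot \nu^{(\ell+1)}$, it holds for $\lambda \preceq \mu$ by transitivity of $\ge$.

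So assume $\mu$ covers $\lambda$. By~(\ref{eq:movebox}), $\lambda_k = \mu_k$ for all $k$ outside two positions $i < j$, where $\mu_i = \lambda_i + 1$ and $\mu_j = \lambda_j - 1$; set $a = \lambda_i$ and $b = \lambda_j$, so $a \ge b \ge 1$ (and adopt the conventions $0! = 1$, $e_0 = 1$, so the degenerate case $b=1$, $\mu_j = 0$ needs no separate treatment). The heart of the argument is to regroup the sum in~(\ref{eq:lmw}) by the blocks other than the $i$th and $j$th: an ordered set partition of $[n]$ of type $\lambda$ amounts to a sequence $(I_k)_{k \neq i,j}$ of pairwise disjoint subsets of $[n]$ with $|I_k| = \lambda_k$, together with an ordered set partition $(I_i, I_j)$ of type $(a,b)$ of the complement $S^c$ of $S := \bigcup_{k \neq i,j} I_k$. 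Because an induced sign-character immanant depends on its matrix argument only through that argument's entries, the inner sum over $(I_i,I_j)$ equals $\imm{\epsilon^{(a,b)}}(x_{S^c,S^c})$, the immanant of the $(a+b)\times(a+b)$ submatrix $x_{S^c,S^c}$. Hence
\[
\imm{\epsilon^\lambda}(x) = \sum_{(I_k)_{k\neq i,j}} \Big(\prod_{k\neq i,j} \det(x_{I_k,I_k})\Big)\, \imm{\epsilon^{(a,b)}}(x_{S^c,S^c}),
\]
and the same identity holds for $\mu$ with $(a+1,b-1)$ in place of $(a,b)$ and the identical outer sum. Subtracting and extracting the positive constant $C := a!\,(b-1)!\prod_{k\neq i,j}\lambda_k!$, I find that the polynomial in question equals
\[
C\sum_{(I_k)_{k\neq i,j}} \Big(\prod_{k\neq i,j} \det(x_{I_k,I_k})\Big)\Big(\, b\,\imm{\epsilon^{(a,b)}}(x_{S^c,S^c}) - (a+1)\,\imm{\epsilon^{(a+1,b-1)}}(x_{S^c,S^c})\,\Big).
\]

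It then remains to evaluate at an arbitrary TNN matrix $A$ and check termwise nonnegativity. Each principal minor $\det(A_{I_k,I_k})$ is nonnegative, and each $A_{S^c,S^c}$ is itself TNN, since every minor of it is a minor of $A$. Finally, for any $(a+b)\times(a+b)$ TNN matrix $B$ one has $b\,\imm{\epsilon^{(a,b)}}(B) \ge (a+1)\,\imm{\epsilon^{(a+1,b-1)}}(B)$: rewriting $\imm{\epsilon^{(a,b)}}(B) = \sum_{|I|=b}\det(B_{I,I})\det(B_{I^c,I^c})$ and $\imm{\epsilon^{(a+1,b-1)}}(B) = \sum_{|I|=b-1}\det(B_{I,I})\det(B_{I^c,I^c})$ via~(\ref{eq:lmw}) and clearing binomial denominators, this is exactly inequality~(\ref{eq:fisher}) of Theorem~\ref{t:fischer} read with $n \mapsto a+b$ and $k \mapsto b-1$, whose hypothesis $0 \le b-1 \le \lfloor (a+b)/2\rfloor - 1$ is precisely the condition $a \ge b$. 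Thus every summand is nonnegative, the displayed polynomial is totally nonnegative, and~(\ref{eq:majorfischerpoly}) follows. I expect the only real obstacle to be carrying out the regrouping cleanly, including the bookkeeping of the factorials and the degenerate boundary cases; the genuine analytic content is already supplied by the two-part inequality, Theorem~\ref{t:fischer}.
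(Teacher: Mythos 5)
Your proposal is correct and follows essentially the same route as the paper: reduce to a covering step in the majorization order via~(\ref{eq:movebox}), regroup the sum over ordered set partitions to isolate the two affected blocks, factor out the common factorials, and then reduce the two-block difference to the two-part case of Theorem~\ref{t:fischer} applied to the TNN submatrix $A_{S^c,S^c}$. The only cosmetic difference is that the paper indexes the outer sum by the union $J$ of the untouched blocks and packages the inner product of minors as $\imm{\epsilon^\nu}(x_{J,J})$, while you index by the full tuple $(I_k)_{k\neq i,j}$ and keep $\prod_{k\neq i,j}\det(x_{I_k,I_k})$; these are the same decomposition written at different levels of aggregation.
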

\begin{proof}
  By (\ref{eq:movebox}) it suffices to consider $\lambda$, $\mu$ having equal parts except
  \begin{equation*}
  \mu_i = \lambda_i + 1, \qquad \mu_j = \lambda_j - 1
  \end{equation*}
  for some $i < j$. (Thus we may assume $s = r$ and will allow $\mu_s = 0$.)
  Let $\nu = (\nu_1,\dotsc,\nu_{r-2})$ be the partition of $n - \lambda_i - \lambda_j$ consisting of all other parts.
  
As in the proofs of Theorem~\ref{t:fischer}, we observe that each sum of products of minors is an induced sign character immanant (\ref{eq:lmw}). Thus the inequality (\ref{eq:majorfischerpoly}) is equivalent to the total nonnegativity of the polynomial
      \begin{equation}\label{eq:thediff}
      \lambda_1! \cdots \lambda_r!\; \imm{\epsilon^\lambda}(x) - 
      \mu_1! \cdots \mu_r!\;
      \imm{\epsilon^\mu}(x).
      \end{equation}
      Dividing this by $\nu_1! \cdots \nu_{r-2}! \lambda_i! \mu_j!$ and collecting terms, we obtain
  \begin{equation*}
  \begin{gathered}
      \lambda_j 
      \ntksp \sumsb{J \subseteq [n]\\|J| = |\nu|} \ntksp \imm{\epsilon^\nu}(x_{J,J}) \imm{\epsilon^{(\lambda_i,\lambda_j)}}(x_{J^c,J^c}) - 
      \mu_i 
      \ntksp \sumsb{J \subseteq [n]\\|J| = |\nu|}\ntksp \imm{\epsilon^\nu}(x_{J,J}) \imm{\epsilon^{(\lambda_i+1,\lambda_j-1)}}(x_{J^c,J^c}) \\
      = \ntksp \sumsb{J \subseteq [n]\\|J| = |\nu|} \imm{\epsilon^\nu}(x_{J,J})
      \left(\lambda_j \imm{\epsilon^{(\lambda_i,\lambda_j)}}(x_{J^c,J^c})
      -
      \mu_i 
      \imm{\epsilon^{(\lambda_i+1,\lambda_j-1)}}(x_{J^c,J^c}) \right).
      \end{gathered}
  \end{equation*}
  By Theorem~\ref{t:fischer}, or more precisely (\ref{eq:fishertlnofrac}),
  this polynomial is TNN and so is (\ref{eq:thediff}).
\end{proof}

It would be interesting to extend the Barrett--Johnson inequalities (\ref{eq:bj}) 
to all HPSD matrices, and to state a permanental analog as 
originally suggested in \cite{BJMajor}.
Using (\ref{eq:lmw})
we may state these problems 
as follows.
\begin{prob}\label{p:bj}
Characterize the pairs $(\lambda,\mu)$
of partitions of $n$ for which we have
\begin{enumerate}
    \item $\lambda_1! \cdots \lambda_r! \;\imm{\epsilon^\lambda}(A) \leq 
    \mu_1! \cdots \mu_r! \;\imm{\epsilon^\mu}(A)$ for all $A$ HPSD,
    \item $\lambda_1! \cdots \lambda_r! \;\imm{\eta^\lambda}(A) \geq
    \mu_1! \cdots \mu_r! \;\imm{\eta^\mu}(A)$ for all $A$ HPSD or PSD,
    \item $\lambda_1! \cdots \lambda_r! \;\imm{\eta^\lambda}(A) \geq
    \mu_1! \cdots \mu_r! \;\imm{\eta^\mu}(A)$ for all $A$ TNN.
\end{enumerate}
\end{prob}

\section{Acknowledgements} The authors are grateful to Shaun Fallat
and Misha Gekhtman for
helpful conversations.  

\bibliography{skan}
\end{document}